\definecolor{Chocolat}{rgb}{0.36, 0.2, 0.09}
\definecolor{BleuTresFonce}{rgb}{0.215, 0.215, 0.36}
\definecolor{EgyptianBlue}{rgb}{0.06, 0.2, 0.65}
\newtheorem{theorem}{Theorem}[section]
\newtheorem{corollary}[theorem]{Corollary}
\theoremstyle{definition}
\DeclareMathAlphabet{\pazocal}{OMS}{zplm}{m}{n}
\def\calB{\pazocal{B}}
\def\calC{\pazocal{C}}
\def\calP{\pazocal{P}}
\def\calQ{\pazocal{Q}}
\def\calR{\pazocal{R}}
\def\calS{\pazocal{S}}
\def\calT{\pazocal{T}}
\def\calX{\pazocal{X}}
\def\calY{\pazocal{Y}}
\DeclareMathOperator{\Hom}{Hom}
\DeclareMathOperator{\sgn}{sgn}
\DeclareMathOperator{\Lie}{\mathsf{Lie}}
\DeclareMathOperator{\Ass}{\mathsf{Ass}}
\DeclareMathOperator{\PL}{\mathsf{PreLie}}
\DeclareMathOperator{\rPL}{\mathsf{rPL}}
\DeclareMathOperator{\RT}{\mathsf{RT}}
\DeclareMathOperator{\Com}{\mathsf{Com}}
\DeclareMathOperator{\Perm}{\mathsf{Perm}}
\DeclareMathOperator{\Tw}{Tw}
\DeclareMathOperator{\MC}{MC}
\DeclareMathOperator{\In}{in}
\DeclareMathOperator{\Br}{\mathsf{Brace}}
\DeclareMathOperator{\Gerst}{\mathsf{Gerst}}
\DeclareMathOperator{\uCom}{\mathsf{uCom}}
\DeclareMathOperator{\uAss}{\mathsf{uAss}}
\newcommand{\Def}{\mathrm{Def}}
\DeclareMathAlphabet{\mathbbold}{U}{bbold}{m}{n}
\def\k{\mathbbold{k}}
\newcommand{\ac}{\scriptstyle \text{\rm !`}}
\begin{document}

\title{Homotopical rigidity of the pre-Lie operad}

\author{Vladimir Dotsenko}
\address{Institut de Recherche Math\'ematique Avanc\'ee, UMR 7501, Universit\'e de Strasbourg et CNRS, 7 rue Ren\'e-Descartes, 67000 Strasbourg CEDEX, France}
\email{vdotsenko@unistra.fr}

\author{Anton Khoroshkin}
\address{National Research University Higher School of Economics,
20 Myasnitskaya street, Moscow 101000, Russia \&
Institute for Theoretical and Experimental Physics, Moscow 117259, Russia}
\email{akhoroshkin@hse.ru}

\dedicatory{This paper is dedicated to Martin Markl on the occasion of his sixtieth birthday}

\begin{abstract}
We show that the celebrated operad of pre-Lie algebras is very rigid: it has no  ``non-obvious'' degrees of freedom from either of the three points of view: deformations of maps to and from the ``three graces of operad theory'', homotopy automorphisms, and operadic twisting. Examining the latter, it is possible to answer two questions of Markl from 2005, including a Lie-theoretic version of the Deligne conjecture.
\end{abstract}

\maketitle

\section*{Introduction} 
Recall that a pre-Lie algebra is a vector space equipped with a bilinear operation $a,b\mapsto a\triangleleft b$ satisfying the identity  
 \[
(a_1\triangleleft a_2)\triangleleft a_3 - a_1\triangleleft (a_2\triangleleft a_3) = (a_1\triangleleft a_3)\triangleleft a_2- a_1\triangleleft (a_3\triangleleft a_2) ,
 \]
known as the pre-Lie identity or the right-symmetric identity. Pre-Lie algebras appear in a wide range of contexts across pure and applied mathematics, from algebra and combinatorics to differential geometry, numerical methods, and theory of renormalisation. In this paper, we study the operad of pre-Lie algebras from the homotopy theory viewpoint. 

It is almost immediate from the definition that each pre-Lie algebra may be regarded as a Lie algebra with the bracket $[a,b]:=a\triangleleft b-b \triangleleft a$, each associative algebra may be regarded as a pre-Lie algebra, and, of course, each commutative associative algebra may be regarded as a pre-Lie algebra. These facts correspond to statements on the level of operads: namely, there are maps of the corresponding operads which induce these functors between categories of algebras.  The first goal of this paper is to show that deformation complexes of these maps to and from the operad $\PL$ are acyclic. Moreover, the deformation complex of the identity map of the operad $\PL$ is also acyclic, implying that the only homotopy automorphisms of the operad $\PL$ are the intrinsic ones given by re-scaling. Our second goal is to study the result of applying to the operad $\PL$ the general construction of operadic twisting due to Thomas Willwacher \cite{MR3348138}. We compute the homology of the operad $\Tw(\PL)$, showing that it coincides with the operad~$\Lie$. Our main motivation to study the operad $\Tw(\PL)$ was coming from search of operations that are naturally defined on deformation complexes of maps of operads. Our theorem shows that the only homotopy invariant structure one may define functorially starting from the convolution pre-Lie algebra structure is that of a dg Lie algebra. 

Both results of this paper are strongly connected to Martin Markl's work on deformation theory. Deformation theory of maps of operads was developed by van der Laan \cite{vdLaan} drawing as an inspiration from Markl's work on models for operads and the cotangent cohomology \cite{MR1415558,MR1380606}. Moreover, it appears that the first time the deformation complex of the identity map of a Koszul operad received substantial attention was in paper \cite{MR2309979} where Markl proposed a general framework for studying natural operations on homology of deformation complexes; in that paper, the deformation complex of the identity map was christened the ``soul of the cohomology of $\calP$-algebras''. (A perhaps unfortunate consequence of this terminology is that cohomology of algebras over many important operads ends up being soulless.) That paper also features Markl's version of the Deligne conjecture for the operad $\Lie$ \cite[Conj.~7]{MR2309979} stating that the shifted Lie algebra structure is the only natural homotopy invariant algebraic structure defined on cotangent complexes of Lie algebras; this led to a sequel paper \cite{MR2325698} where Markl gave a beautiful but mysterious cohomological description of Lie elements in free pre-Lie algebras. In fact, we found a way to de-mystify that description: the cryptic construction of the operad $\rPL$ in \cite{MR2325698} is best understood in the context of the operadic twisting, as a layer of the operad $\Tw(\PL)$, answering Markl's question on higher homology of~$\rPL$. Our approach also brought us to a realisation that an earlier result of Willwacher \cite[Th.~3.6]{willwacher2017prelie} essentially resolves Markl's version of the Deligne conjecture for deformations of Lie algebras. In view of all these connections, it is only natural that we wish to dedicate our work to Martin for his birthday, wishing him many happy returns of the day. 

Our arguments use filtration arguments that appear quite often when working with graph complexes \cite{MR3348138,willwacher2017prelie}, so in addition to furnishing the proofs of several new results, this paper hopefully might serve a pedagogical purpose, giving the reader an insight into several useful tricks that are normally hidden deep inside very condensed papers. 

For all the relevant definition from the operad theory we refer the reader to the monograph \cite{MR2954392}. All vector spaces in this paper are defined over a field $\k$ of characteristic zero, and all chain complexes are homological (with the differential of degree $-1$). We use the symbol $s$ to handle suspensions, and the symbol $\calS$ for operadic suspensions. When writing down elements of operads, we use small latin letters as placeholders; for arguments of nontrivial homological degrees, there are extra signs which arise via the usual Koszul sign rule from applying operations to arguments.

\section{Combinatorics related to the pre-Lie operad}

Most of the arguments of this paper rely on the description of the operad $\PL$ controlling pre-Lie algebras in terms of labelled rooted trees due to Chapoton and Livernet~\cite{MR1827084} (which was perhaps known to Cayley \cite{Cayley1857}). In modern terms, the underlying $\mathbb{S}$-module of the operad $\PL$ is the linearisation of the species $\RT$ of labelled rooted trees, and the operadic insertion of trees can be described combinatorially as follows. For a labelled rooted tree~$S\in\RT(J)$ and a labelled rooted tree~$T\in\RT(I)$, the composition $T\circ_i S$ is equal to the sum
 \[
\sum_{f\colon \In_T(i)\to J} T\circ_i^f S ,
 \]
where the sum is over all functions $f$ from the set of incoming edges of the vertex labelled $i$ to the set~$J$ of all vertices of~$S$: the labelled rooted tree $T\circ_i^f S$ is obtained by grafting the tree $S$ in the place of the vertex~$i$, and grafting the subtrees growing from the vertex $i$ in $T$ at the vertices of $S$ according to the function~$f$, so that the set of incoming edges of each vertex~$j$ becomes $\In_S(j)\sqcup f^{-1}(j)$. For example, we have
 \[
\vcenter{
\xymatrix@M=3pt@R=5pt@C=5pt{
*+[o][F-]{1}\ar@{-}[dr] &&  *+[o][F-]{3}\ar@{-}[dl] \\
& *+[o][F-]{2} & 
}}
\ \circ_2 \ 
\vcenter{
\xymatrix@M=3pt@R=5pt@C=5pt{
*+[o][F-]{a}\ar@{-}[d]  \\
*+[o][F-]{c} 
}}=
\vcenter{
\xymatrix@M=3pt@R=5pt@C=5pt{
*+[o][F-]{1}\ar@{-}[dr] &*+[o][F-]{a}\ar@{-}[d] & *+[o][F-]{3}\ar@{-}[dl]\\
& *+[o][F-]{c} & 
}}+
\vcenter{
\xymatrix@M=3pt@R=5pt@C=5pt{
*+[o][F-]{1}\ar@{-}[dr] & & *+[o][F-]{3}\ar@{-}[dl]\\
 &  *+[o][F-]{a}\ar@{-}[d]& \\
& *+[o][F-]{c} & 
}}+
\vcenter{
\xymatrix@M=3pt@R=5pt@C=5pt{
  & *+[o][F-]{3}\ar@{-}[d]\\
*+[o][F-]{1}\ar@{-}[dr] &  *+[o][F-]{a}\ar@{-}[d]& \\
& *+[o][F-]{c} & 
}}+
\vcenter{
\xymatrix@M=3pt@R=5pt@C=5pt{
*+[o][F-]{1}\ar@{-}[d] & & \\
  *+[o][F-]{a}\ar@{-}[d]& *+[o][F-]{3}\ar@{-}[dl]\\
 *+[o][F-]{c} & 
}}\ .
 \]

The Koszul dual operad $\PL^!$ is also very easy to describe combinatoriallly. That operad is usually denoted $\Perm$, and the corresponding algebras are called permutative algebras; a permutative algebra is an associative algebra that additionally satisfies the identity $a_1a_2a_3=a_1a_3a_2$. It is clear that the underlying species of the operad $\Perm$ is the species of sets with one marked element (the first element of the $n$-fold product), so the corresponding representation of the symmetric group is the standard permutation representation. While permutative algebras themselves do not often arise naturally, the operad $\Perm$ is used in an important general construction of the operad theory going back to work of Chapoton \cite{MR1860996}: for any operad $\calP$, one may define the operad of $\calP$-dialgebras as the Hadamard product $\calP\stackrel[\mathrm{H}]{}{\otimes}\Perm$. Intuitively, one should think of $\calP$-dialgebras as of $\calP$-algebras with one element underlined. In some proofs of this paper, we shall encounter associative dialgebras \cite{MR1860994} and pre-Lie dialgebras \cite{felipe2011prelie,MR2419658}.

\section{Deformation theory for maps of Koszul operads}\label{sec:DefKoszul}

In this section, we assume each operad $\calP$ reduced ($\calP(0)=0$), connected ($\calP(1)\cong\k$), and with finite-dimensional components. We begin with briefly recalling a particular case of the general results of \cite{MR2572248,vdLaan} that highlight the role of pre-Lie algebras in deformation theory; these instances of pre-Lie algebras serve as a motivation for Section~\ref{sec:twisting} of this paper. Let $\calP$ be a Koszul operad, and let $f\colon \calP\to\calQ$ be a map from $\calP$ to a dg operad~$\calQ$. In this case, the general recipe for computing the deformation complex of the map $f$ produces a small and tractable chain complex in several easy steps. First, one should consider the \emph{convolution operad} between the Koszul dual cooperad of $\calP$ and the dg operad $\calQ$; its underlying dg $\mathbb{S}$-module is $\Hom(\calP^{\ac},\calQ)$, and the operad composition maps $\circ_i$ are computed using the general philosophy behind convolution products: to evaluate the operadic composition $\phi\circ_i\psi$ on $\alpha\in\calP^{\ac}$, one applies the cooperad decomposition map $\Delta_i$ of the cooperad $\calP^{\ac}$ to $\alpha$, computes the tensor product of maps $\phi\otimes\psi$ on the result, and computes the composition $\circ_i$ of the operad $\calQ$. As any operad, the convolution operad can be made into a pre-Lie algebra using the formula 
 \[
\phi\triangleleft\psi=\sum_i \phi\circ_i \psi.
 \]
One can check that invariants of symmetric groups are closed under this convolution product, so one may consider the following \emph{convolution pre-Lie algebra} 
 \[
\Hom^{\mathbb{S}}(\calP^{\ac},\calQ) := \prod_{n\ge 1}\Hom(\calP^{\ac}(n),\calQ(n))^{\mathbb{S}_n} .
 \]
Finally, using the Lie bracket $[a,b]:=a\triangleleft b-b \triangleleft a$ mentioned in the introduction, one may consider that space as a Lie algebra called \emph{the convolution (Lie) algebra}; it is a dg Lie algebra (with zero differential if the operad $\calQ$ has zero differential). Recall that the Maurer--Cartan equation in a dg Lie algebra $(L,[-,-],d)$ is the equation 
 \[
d(\alpha)+\frac12[\alpha,\alpha]=0,
 \] 
and elements $\alpha$ of degree $-1$ satisfying this condition are called Maurer--Cartan elements; it is possible to show that in our case of the convolution Lie algebra, Maurer--Cartan elements correspond to maps of operads from $\calP_\infty=\Omega(\calP^{\ac})$ to $\calQ$. In general, a Maurer--Cartan element in a Lie algebra can be used to twist the differential, letting
 \[
d_\alpha=d+[\alpha,-] .
 \]
We shall twist the differential in our dg Lie algebra using a particular Maurer--Cartan element $\alpha$ corresponding to the map from $\calP_\infty$ to $\calQ$ that is obtained from $f$ by the pre-composition with the projection $\calP_\infty\twoheadrightarrow\calP$. By definition, the \emph{deformation complex of map $f$} is the dg Lie algebra
 \[
\Def(f\colon\calP\to\calQ):=\left(\Hom^{\mathbb{S}}(\calP^{\ac},\calQ),[-,-],d_\alpha\right) .
 \] 
Maurer--Cartan elements of that differential graded Lie algebra, that is elements $\lambda$ of degree $-1$ satisfying 
 \[
d_\alpha(\lambda)+\frac12[\lambda,\lambda]=0,
 \]
correspond to infinitesimal deformations of the morphism $f$, and gauge equivalence of such corresponds to equivalence of deformations; thus, this complex essentially controls the deformation theory of the map $f$. In fact, one may replace the deformation complex by its homology with the transferred $L_\infty$-algebra structure: that $L_\infty$-algebra is filtered, so one may work with its Maurer--Cartan elements instead, not losing any information \cite{MR3431305,MR3323983}. 

We remark that it is common to remove the counit from the cooperad $\calP^{\ac}$ and define the deformation complex as 
$\left(\Hom^{\mathbb{S}}(\overline{\calP^{\ac}},\calQ),d_\alpha\right)$. Since our operads are assumed to be connected, the difference between the two complexes is a one-dimensional space, so it accounts just for one extra homology class. (If $\calQ=\calP$, the corresponding homology class accounts for the inner derivation of $\calP$ given by the commutator with the operad unit, or, after exponentiation, to re-scaling operations \cite[Rem.~6.3.3]{ksdeftheory}.) We prefer to work with the bigger complex $\Hom^{\mathbb{S}}(\calP^{\ac},\calQ)$, since it allows for more elegant results; however, it is important to remember that the abovementioned extra homology class always exists.

Let us record here a homology computation (due to Markl) for deformation complex of the identity map for each of the ``three graces of operad theory'' which is one of the earliest such computations in the literature\footnote{It would be fair to note that the argument of \cite[Ex.~14]{MR2309979} needs a minor correction: for $n\ge 3$ the $\mathbb{S}_n$-module $\Lie(n)$ does not contain the sign representation \cite{MR0371961}, so the deformation complexes for the identity maps of operads $\Com$ and $\Lie$ almost completely collapse even before passing to the homology.}; particular cases of these results (for the homology in degrees $1$ and $2$) have also recently been re-proved in \cite{bao2020cohomological}.

\begin{theorem}[{\cite[Th.~13 \& Ex.~14]{MR2309979}}]
The following complexes are acyclic:
\begin{enumerate}
\item the deformation complex of the identity map of the Lie operad,
\item the deformation complex of the identity map of the commutative operad, 
\item the deformation complex of the identity map of the associative operad. 
\end{enumerate}
\end{theorem}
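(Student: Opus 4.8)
The plan is to run all three cases through the explicit model recalled above. For a Koszul operad $\calP$ with zero differential, $\Def(\id\colon\calP\to\calP)$ is the chain complex $\bigl(\Hom^{\mathbb{S}}(\calP^{\ac},\calP),\,[\alpha,-]\bigr)$, where $\alpha$ is the canonical Koszul twisting morphism $\calP^{\ac}\to\calP$; it is supported in arity two, where it is the desuspension isomorphism onto the space of generators. I would first isolate the ``obvious'' part of the homology, uniformly. The arity-one component of $\Hom^{\mathbb{S}}(\calP^{\ac},\calP)$ is the line spanned by the identity $\mathbf 1$ of $\calP(1)\cong\k$, in homological degree $0$, and a short computation with the convolution product gives $[\alpha,\mathbf 1]=\alpha\triangleleft\mathbf 1-\mathbf 1\triangleleft\alpha=2\alpha-\alpha=\alpha$, while $[\alpha,\alpha]=0$ since $\alpha$ is Maurer--Cartan. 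Hence it is enough to prove that the reduced subcomplex $\Hom^{\mathbb{S}}(\overline{\calP^{\ac}},\calP)$ (arities $\ge 2$) has homology equal to the one-dimensional space $\k\alpha$; the extra arity-one cell then kills that class and the whole complex becomes acyclic. In deformation-theoretic language this is precisely the assertion that $\calP$ admits no infinitesimal deformations beyond rescaling of the generators.

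For $\calP=\Lie$ and $\calP=\Com$ this follows from representation theory almost without further computation. Since $\Lie^!=\Com$ and $\Com^!=\Lie$, the arity-$n$ component $\calP^{\ac}(n)$ equals $\sgn_n\otimes\bigl(\calP^!(n)\bigr)^{*}$ placed in homological degree $n-1$; dualising where necessary and using self-duality of $\mathbb{S}_n$-representations, one finds that the arity-$n$ component of $\Hom^{\mathbb{S}}(\calP^{\ac},\calP)$ equals $s^{1-n}\bigl(\Lie(n)\otimes\sgn_n\bigr)^{\mathbb{S}_n}$ in both cases. For $n=1,2$ this is one-dimensional, while for $n\ge 3$ it vanishes because $\Lie(n)$ contains no copy of the sign representation \cite{MR0371961}: writing $\Lie(n)=\mathrm{Ind}_{C_n}^{\mathbb{S}_n}\chi$ for a primitive character $\chi$ of the cyclic group $C_n$, the restriction of $\sgn_n$ to $C_n$ fails to be primitive once $n\ge 3$. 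Hence for $\Lie$ and $\Com$ the complex is supported in arities $1$ and $2$, one-dimensional in each, and the differential $\mathbf 1\mapsto\alpha$ found above is an isomorphism; the complex is acyclic, which gives items (1) and (2).

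The associative operad is the substantive case, and where I expect the genuine obstacle to lie. Here $\Ass^!=\Ass$, the $\mathbb{S}_n$-modules in play are all free, the complex does not collapse --- its arity-$n$ component has dimension $n!$ --- and its homology must be computed. I would recognise $\bigl(\Hom^{\mathbb{S}}(\Ass^{\ac},\Ass),[\alpha,-]\bigr)$ as the cochain complex computing the operadic cohomology of $\Ass$ with coefficients in itself (legitimate because $\Ass$ is Koszul) and then show that this cohomology is one-dimensional, spanned by $\alpha$. That statement is the rigidity of associative algebra structures in operadic form --- classical in substance, and in this shape established by Markl --- and by the reduction of the first paragraph it is exactly what acyclicity requires. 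A self-contained alternative, in the filtration-and-homotopy spirit this paper stresses, is to work directly on $\Hom^{\mathbb{S}}(\overline{\Ass^{\ac}},\Ass)$: describing $\Ass^{\ac}$ by planar trees, the differential $[\alpha,-]$ unpacks as a sign-twisted form of the simplicial differential of an augmented Hochschild-type complex, on which one can exhibit a contracting homotopy. In either route the one delicate point --- and the place I would treat most carefully --- is the bookkeeping of Koszul signs coming from the operadic (co)suspensions; the rest is combinatorics. This settles (3), and with it the theorem.
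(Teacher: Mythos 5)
The paper does not actually prove this statement --- it cites Markl \cite[Th.~13 \& Ex.~14]{MR2309979} --- and the only argument it offers is the footnote observing that $\Lie(n)$ contains no copy of $\sgn_n$ for $n\ge 3$, so that the complexes for $\Com$ and $\Lie$ collapse before one even takes homology; your treatment of items (1) and (2) is exactly that observation, correctly completed by the arity-one computation $[\alpha,\mathbf{1}]=\alpha$ which accounts for the remaining two cells. For item (3) you, like the paper, ultimately defer to Markl's rigidity computation for $\Ass$, which is legitimate given the attribution; just be aware that your alternative ``contracting homotopy on a Hochschild-type complex'' is only a sketch and is the one place where real work remains, since the arity-$n$ component there is genuinely $n!$-dimensional and nothing collapses for representation-theoretic reasons.
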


Suppose that $\calP=\calT(\calX)/(\calR)$ and $\calQ=\calT(\calY)/(\calS)$ are two Koszul operads, and let $f\colon \calP\to\calQ$ be a map of operads induced by a map of quadratic data $(\calX,\calR)\to(\calY,\calS)$; we use the notation $f_\circ\colon\calX\to\calY$ for the corresponding map of generators. A map of quadratic data also induces a map of Koszul dual cooperads $f^{\ac}\colon\calP^{\ac}\to\calQ^{\ac}$. Moreover, under the finite-dimensionality assumptions imposed in the beginning of this section, one may dualise and take the Hadamard tensor product of the map $f_\circ^*\colon\calY^*\to\calX^*$ with $s^{-1}\calS^{-1}$ to obtain a well defined map of operads $f^!\colon\calQ^!\to\calP^!$. Unlike in the case of linear duality, it is not possible to predict what properties the Koszul dual map of a map of operads would have: 

-- the Koszul dual of the surjection $\PL\to\Com$ is the map $\Lie\to\Perm$ which is neither surjective nor injective (in fact, one can prove that the image of that map is isomorphic to the operad of Lie algebras satisfying the identity $[[a_1,a_2],[a_3,a_4]]=0$),

-- the Koszul dual of the embedding $\Lie\to\PL$ is the surjection $\Perm\to\Com$,

-- the Koszul dual of the surjection $\PL\to\Ass$ is the surjection $\Ass\to\Perm$. 

\noindent
However, deformation complexes behave well under Koszul duality, as we shall now show. 

\begin{theorem}\label{th:Koszul}
We have an isomorphism of differential graded Lie algebras
 \[
\Def(f\colon\calP\to\calQ)\cong\Def(f^!\colon\calQ^!\to\calP^!).
 \]
\end{theorem}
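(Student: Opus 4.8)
The plan is to show that the convolution Lie algebra underlying a deformation complex is, up to a global suspension twist, self-dual under arity-wise linear duality, and that this duality exchanges the pair $(\calP,\calQ)$ with the pair $(\calQ^!,\calP^!)$. First I would record a purely formal duality for convolution operads: for a coaugmented cooperad $\calC$ and an operad $\calO$ with finite-dimensional components, arity-wise linear duality turns $\calC^*$ into an operad and $\calO^*$ into a cooperad, and transposition of maps defines an isomorphism of operads
\[
\Hom(\calC,\calO)\xrightarrow{\ \sim\ }\Hom(\calO^*,\calC^*),\qquad \phi\mapsto\phi^* .
\]
This is immediate from the definitions: the composition $\circ_i$ of the left-hand side is the composite ``decompose in $\calC$, apply $\phi\otimes\psi$, compose in $\calO$'', that of the right-hand side is the composite ``decompose in $\calO^*$, apply $\phi^*\otimes\psi^*$, compose in $\calC^*$'', and since the decomposition of $\calO^*$ is dual to the composition of $\calO$, and the composition of $\calC^*$ is dual to the decomposition of $\calC$, the two composites are mutually transpose (one checks that the Koszul signs produced by the internal gradings match up). The map is equivariant for the symmetric group actions, since finite-dimensional $\mathbb{S}_n$-representations over $\k$ are self-dual, so it restricts to an isomorphism $\Hom^{\mathbb{S}}(\calC,\calO)\cong\Hom^{\mathbb{S}}(\calO^*,\calC^*)$, and hence to an isomorphism of the associated convolution pre-Lie algebras and of the convolution Lie algebras.

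Next I would apply this with $\calC=\calP^{\ac}$ and $\calO=\calQ$, after which it suffices to identify $(\calP^{\ac})^*$ with $\calP^!$ and $\calQ^*$ with $(\calQ^!)^{\ac}$. Both are instances of the single standard relation that the linear dual of the Koszul dual cooperad of a Koszul operad $\calR$ is, up to the global shift $s^{-1}\calS^{-1}$ appearing in the excerpt, its Koszul dual operad $\calR^!$: applied to $\calR=\calP$ this gives the first identification directly, and applied to $\calR=\calQ$, after one further dualisation and the substitution $\calQ\leftrightarrow\calQ^!$, it gives the second with the \emph{same} shift factor. Since twisting both the source and the target of a convolution operad by the same operadic suspension does not change it, the right-hand side of the isomorphism above becomes exactly $\Hom^{\mathbb{S}}((\calQ^!)^{\ac},\calP^!)$, the convolution Lie algebra underlying $\Def(f^!\colon\calQ^!\to\calP^!)$, with no residual shift. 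Checking that the two shift factors genuinely cancel --- that is, tracking how $s^{-1}\calS^{-1}$ interacts with the arity-changing composition maps and with the Koszul sign rule, which is precisely what forces the choice of $s^{-1}\calS^{-1}$ in the definition of $f^!$ --- is the one delicate point, and the only place where Koszulness, finite-dimensionality, and $\mathrm{char}\,\k=0$ enter; an alternative is to bypass the abstract statement above and match the two deformation complexes directly on the level of the tree-type generators, where the signs are more transparent.

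Finally I would match the twisted differentials. Since $\calP$ and $\calQ$ carry no internal differential, $d_\alpha=[\alpha,-]$ and $d_{\alpha^!}=[\alpha^!,-]$, so it remains only to see that the isomorphism of the previous two steps carries $\alpha$ to $\alpha^!$. But $\alpha$ is concentrated on the quadratic cogenerators of $\calP^{\ac}$, where it is the map of generators $f_\circ\colon\calX\to\calY$, and under transposition together with the suspension identifications it goes to the element concentrated on the quadratic cogenerators of $(\calQ^!)^{\ac}$ given by the Hadamard product of $f_\circ^*$ with $s^{-1}\calS^{-1}$, which is by construction $f^!_\circ$ and hence defines the Maurer--Cartan element $\alpha^!$. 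Therefore the isomorphism of convolution Lie algebras intertwines the two twisted differentials, yielding the asserted isomorphism $\Def(f\colon\calP\to\calQ)\cong\Def(f^!\colon\calQ^!\to\calP^!)$ of differential graded Lie algebras. I expect the suspension and sign bookkeeping of the second step to be the main obstacle; the first and third steps are essentially formal.
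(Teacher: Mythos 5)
Your proposal is correct and follows essentially the same route as the paper: the transposition isomorphism $\Hom(\calC,\calO)\cong\Hom(\calO^*,\calC^*)$ you start from is, for operads with finite-dimensional components, precisely the paper's identification of both convolution operads with the Hadamard product $\calS\otimes_{\mathrm{H}}\calP^!\otimes_{\mathrm{H}}\calQ$ (via $(\calP^{\ac})^*\cong\calS\otimes_{\mathrm{H}}\calP^!$, commutativity of the Hadamard product, and $(\calQ^!)^!\cong\calQ$), and your matching of the Maurer--Cartan elements through $f_\circ\mapsto f_\circ^*$ is also exactly the paper's final step. The suspension and sign bookkeeping you defer is likewise left implicit in the paper, which restricts to binary generators and exhibits explicit bases, so your level of detail is comparable.
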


\begin{proof}
Throughout this proof, we shall assume for simplicity that our operads are generated by binary operations; the theorem holds in full generality, and we invite the reader to adapt the proof for the general case. The Maurer--Cartan element $\alpha$ corresponding to the map $f$ in the convolution Lie algebra $\Hom^{\mathbb{S}}(\calP^{\ac},\calQ)$ is  equal to $s^{-1} f_\circ$, where 
 \[
s^{-1} f_\circ\in s^{-1} \Hom^{\mathbb{S}}(\calX,\calY)\cong \Hom^{\mathbb{S}}(s\calX,\calY)\subset \Hom^{\mathbb{S}}(\calP^{\ac},\calQ).
 \]
Let us examine the formula for the deformation complex a bit closer. Using \cite[Sec.~7.2.3]{MR2954392}, we note that the underlying $\mathbb{S}$-module of the convolution operad $\Hom(\calP^{\ac},\calQ)$ is 
 \[
(\calP^{\ac})^*\stackrel[\mathrm{H}]{}{\otimes}\calQ \cong \calS\stackrel[\mathrm{H}]{}{\otimes}\calP^!\stackrel[\mathrm{H}]{}{\otimes}\calQ ,
 \]
with the operad structure given by the factor-wise operad structure on the Hadamard tensor product, and that the $\mathbb{S}$-module of generators of the Koszul dual operad $\calP^!$ is $s^{-1}\calS^{-1}\stackrel[\mathrm{H}]{}{\otimes}\calX^*$. In particular, the $\mathbb{S}$-submodule 
 $
s^{-1}\calX^*\stackrel[\mathrm{H}]{}{\otimes}\calQ\cong\Hom(s\calX,\calQ)$ of $\Hom(\calP^{\ac},\calQ)$ 
consisting of maps supported at the cogenerators of $\calP^{\ac}$ is identified with the submodule 
 \[
\calS\stackrel[\mathrm{H}]{}{\otimes}\left(s^{-1}\calS^{-1}\stackrel[\mathrm{H}]{}{\otimes}\calX^*\right)\stackrel[\mathrm{H}]{}{\otimes}\calQ
 \]
of the Hadamard product, and the space $s^{-1} \Hom^{\mathbb{S}}(\calX,\calY)$ is identified with 
 \[
\left(\calS(2)\otimes (s^{-1}\calS^{-1}(2)\otimes\calX^*(2))\otimes\calY(2)\right)^\mathbb{S} . 
 \]
Let us denote by $\mu$ the basis element $s^{-1}\in\k s^{-1}\cong\calS(2)$ and by $\nu$ the basis element $s\in\k s\cong\calS^{-1}(2)$. If we denote by $\{x_i\}$ a basis of $\calX(2)$ and by $\{x_i^\vee=s^{-1}\nu\otimes x_i^*\}$ the corresponding basis of $\calX^!(2)$, the element in the Hadamard product corresponding to the Maurer--Cartan element $\alpha$ used to twist the differential is
 $
\sum_i \mu\otimes x_i^\vee\otimes f(x_i) .
 $
Since the operad $\calQ$ is Koszul, we may use the same techniques for the map $f^!$. The corresponding convolution operad is
 \[
\calS\stackrel[\mathrm{H}]{}{\otimes}\calP^!\stackrel[\mathrm{H}]{}{\otimes}\calQ \cong\calS\stackrel[\mathrm{H}]{}{\otimes}\calQ\stackrel[\mathrm{H}]{}{\otimes}\calP^! \cong \calS\stackrel[\mathrm{H}]{}{\otimes}(\calQ^!)^!\stackrel[\mathrm{H}]{}{\otimes}\calP^!  .
 \]
We note that this operad is isomorphic to the convolution operad corresponding to the morphism~$f$. Consequently, the convolution Lie algebra on $\Hom^{\mathbb{S}}(\calP^{\ac},\calQ)$ is isomorphic to that on $\Hom^{\mathbb{S}}((\calQ^!)^{\ac},\calP^!)$. The canonical isomorphism 
 \[
\Hom^{\mathbb{S}}(\calX,\calY)\cong \Hom^{\mathbb{S}}(\calY^*,\calX^*)
 \]
sends $f_\circ$ to $f_\circ^*$, which easily implies that under our identifications the Maurer--Cartan elements corresponding to $f$ and $f^!$ are identified. This completes the proof.
\end{proof}

In a particular case where $\calP=\calQ$ and the map $f$ is the identity map, this theorem states that the deformation theory of the identity map is the same for a Koszul operad and its dual. In the case of associative algebras (i.e. operads concentrated in arity one), the deformation complex of the identity map of a Koszul algebra $A$ defined above has the same homotopy type as the Hochschild cohomology complex $C^{-\bullet}(A,A)$; for the case of operads it is an appropriate generalisation of the Hochschild complex. The fact that for a Koszul associative algebra the homotopy type of that differential graded Lie algebra is invariant under Koszul duality is due to Keller \cite{keller2004derived}. 

\section{Deformation complexes of map between the pre-Lie operad and the ``three graces''}\label{sec:maps}

In this section, we show that the maps between the operad $\PL$ and the operads $\Com$, $\Lie$ and~$\Ass$, christened by Jean--Louis Loday the ``three graces of operad theory'', are homotopically rigid. It is easy to show that there are, up to re-scaling, just three such maps mentioned in the introduction:
the projection from $\PL$ to $\Com$ sending the pre-Lie product to the product in the commutative operad, 
the map from $\Lie$ to $\PL$ sending $[a,b]$ to $a\triangleleft b-b \triangleleft a$, and 
the projection from $\PL$ to $\Ass$ sending the pre-Lie product to the associative product.

\subsection{The map to the commutative operad}

Our first vanishing theorem uses a simple representation-theoretic argument.  

\begin{theorem}\label{th:PLCom}
The deformation complex of the projection $\PL\to\Com$ is acyclic.
\end{theorem}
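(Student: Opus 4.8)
The plan is to compute this deformation complex essentially by hand: first to show that for representation-theoretic reasons its underlying $\mathbb{S}$-module is concentrated in arities $1$ and $2$, and then to dispatch the resulting two-dimensional complex by a direct computation of the twisted differential.

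I would begin by unwinding the definition of Section~\ref{sec:DefKoszul}. The underlying graded vector space of $\Def(\PL\to\Com)$ is $\Hom^{\mathbb{S}}(\PL^{\ac},\Com)$; since $\Com(n)=\k$ is the trivial $\mathbb{S}_n$-representation concentrated in homological degree $0$, its arity-$n$ component is $\Hom_{\mathbb{S}_n}(\PL^{\ac}(n),\k)$, sitting in homological degree $1-n$. As an ungraded $\mathbb{S}_n$-module, $\PL^{\ac}(n)\cong\sgn_n\otimes\PL^!(n)^{*}=\sgn_n\otimes\Perm(n)$, where the last step uses that $\Perm$ is carried by a permutation representation and is therefore self-dual. (Equivalently, one may first apply Theorem~\ref{th:Koszul} to replace $\Def(\PL\to\Com)$ by $\Def(\Lie\to\Perm)$, whose arity-$n$ component is $\Hom_{\mathbb{S}_n}(\sgn_n,\Perm(n))$, since $\Lie^{\ac}(n)\cong\sgn_n$.) Either way, the computation reduces to finding the multiplicity of $\sgn_n$ in $\Perm(n)$.

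The key step is now entirely elementary: the permutation representation $\Perm(n)\cong\k^{n}$ decomposes as $\k\oplus V_{(n-1,1)}$, so the sign representation $\sgn_n=V_{(1^n)}$ occurs in it with multiplicity $0$ for every $n\ge 3$ and with multiplicity $1$ for $n=2$. Hence $\Def(\PL\to\Com)$ vanishes in all arities $\ge 3$, and what survives is a two-term complex: a copy of $\k$ spanned by the unit $\id$ of the convolution operad in homological degree $0$ (the arity-$1$ part), a copy of $\k$ spanned by the Maurer--Cartan element $\alpha=s^{-1}f_\circ$ in homological degree $-1$ (the arity-$2$ part), with vanishing internal differential. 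Since $\alpha$ lies in the bottom degree it is automatically a cocycle, so acyclicity reduces to checking $d_\alpha(\id)=[\alpha,\id]\neq 0$. As $\id$ is the operad unit, $\alpha\triangleleft\id=\alpha\circ_1\id+\alpha\circ_2\id=2\alpha$ while $\id\triangleleft\alpha=\alpha$, whence $d_\alpha(\id)=[\alpha,\id]=2\alpha-\alpha=\alpha$; this is nonzero because $f_\circ$, sending the pre-Lie product to the commutative product, is nonzero. Thus $d_\alpha$ is an isomorphism of the two one-dimensional pieces and the complex is acyclic.

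The representation-theoretic vanishing is immediate once $\PL^{\ac}$ (or, after Koszul duality, $\Lie^{\ac}$) has been pinned down, so I expect the only point requiring genuine care to be bookkeeping: getting the sign and operadic-suspension twists right in $\PL^{\ac}(n)\cong\sgn_n\otimes\Perm(n)$, and checking that the two surviving components lie in adjacent homological degrees --- which is exactly what makes the single nonzero value of $[\alpha,\id]$ enough to kill all the homology.
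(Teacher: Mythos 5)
Your argument is correct and follows essentially the same route as the paper: identify $\PL^{\ac}(n)\cong s^{n-1}\sgn_n\otimes\Perm(n)$, observe that the permutation representation $\Perm(n)$ contains no copy of $\sgn_n$ for $n\ge 3$ so the complex reduces to the two one-dimensional pieces in degrees $0$ and $-1$, and check that the twisted differential maps one isomorphically onto the other. Your explicit computation $[\alpha,\id]=2\alpha-\alpha=\alpha$ just spells out the paper's closing remark that ``the differential of the class of degree $0$ kills the class of degree $-1$.''
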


\begin{proof}
Here, one uses the convolution algebra 
$
\prod_{n\ge 1} \Hom_{\mathbb{S}_n}(\PL^{\ac}(n),\Com(n)) 
$ with the differential $[\alpha,-]$; the element $\alpha$ sends $s(a_1\triangleleft a_2)$ to $a_1\cdot a_2$. The $\mathbb{S}_n$-module isomorphism 
 $
\PL^{\ac}(n)\cong s^{n-1}\sgn_n\otimes\Perm(n)
 $
implies that
 \[
\prod_{n\ge 1} \Hom_{\mathbb{S}_n}(\PL^{\ac}(n),\Com(n))\cong \prod_{n\ge 1}s^{1-n}\Hom_{\mathbb{S}_n}(\Perm(n),\sgn_n). 
 \]
Since the $\mathbb{S}_n$-module $\Perm(n)$ is isomorphic to the standard permutation representation, it does not contain the sign representation for $n\ge 3$, and the respective Hom-spaces (computing the multiplicities of the sign representations) vanish. It follows that the convolution Lie algebra is concentrated in degrees zero and $-1$; the differential of the class of degree $0$ kills the class of degree~$-1$.  
\end{proof}

Using Theorem \ref{th:Koszul}, we obtain the Koszul dual result.

\begin{corollary}\label{cor:PLCom}
The deformation complex of the map $\Lie\to\Perm$ is acyclic.
\end{corollary}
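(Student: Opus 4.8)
The corollary follows immediately from Theorem~\ref{th:Koszul} applied to the map $f\colon\PL\to\Com$: its Koszul dual is the map $f^!\colon\Com^!\to\PL^!$, and since $\Com^!=\Lie$ and $\PL^!=\Perm$, we obtain an isomorphism of differential graded Lie algebras
\[
\Def(\PL\to\Com)\cong\Def(\Lie\to\Perm).
\]
The left-hand side is acyclic by Theorem~\ref{th:PLCom}, hence so is the right-hand side. The only point that requires (brief) verification is that the Koszul dual of the particular map $\PL\to\Com$ under consideration is indeed the specific map $\Lie\to\Perm$ appearing in the statement, rather than some rescaling or the zero map; this is exactly the content of the first bullet point in the discussion preceding Theorem~\ref{th:Koszul}, where it is noted that the Koszul dual of the surjection $\PL\to\Com$ is the map $\Lie\to\Perm$ whose image is the operad governing Lie algebras with $[[a_1,a_2],[a_3,a_4]]=0$. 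Since both $\Com$ and $\PL$ are Koszul and generated by binary operations, the hypotheses of Theorem~\ref{th:Koszul} are satisfied, and the proof is complete.

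\begin{proof}
Apply Theorem~\ref{th:Koszul} to the projection $\PL\to\Com$. Its Koszul dual is the map $\Lie\to\Perm$ (as recorded in the discussion preceding Theorem~\ref{th:Koszul}), so we get an isomorphism of differential graded Lie algebras $\Def(\PL\to\Com)\cong\Def(\Lie\to\Perm)$. By Theorem~\ref{th:PLCom}, the left-hand side is acyclic, hence so is the right-hand side.
\end{proof}
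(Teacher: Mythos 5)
Your proof is correct and is exactly the paper's argument: the corollary is stated in the paper with the one-line justification ``Using Theorem~\ref{th:Koszul}, we obtain the Koszul dual result,'' which is precisely your application of Theorem~\ref{th:Koszul} to the projection $\PL\to\Com$ combined with Theorem~\ref{th:PLCom}. Your additional remark identifying the Koszul dual map as the specific map $\Lie\to\Perm$ from the preceding discussion is a reasonable (and accurate) elaboration of the same reasoning.
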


\subsection{The map from the Lie operad}

Our result of this section is the first slightly non-trivial rigidity theorem involving the pre-Lie operad. A similar but much simpler argument shows that the deformation complex of the analogous map from the Lie operad to the associative operad is also acyclic.

\begin{theorem}\label{th:LiePreLie}
The deformation complex of the inclusion $\Lie\to\PL$ is acyclic.
\end{theorem}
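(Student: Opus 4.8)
The plan is to make the deformation complex completely explicit and then attack it with a filtration argument of the kind used for graph complexes. By Theorem~\ref{th:Koszul} one may equivalently study $\Def(\Lie\to\PL)$ or $\Def(\Perm\to\Com)$; in either description the convolution Lie algebra unwinds to the same underlying complex. Since $\Lie^{\ac}(n)$ is one-dimensional and, up to a degree shift, isomorphic to the sign representation $\sgn_n$, the $\mathbb{S}$-module $\Hom(\Lie^{\ac},\PL)$ is the operadic suspension $\calS\otimes_{\mathrm H}\PL$, so that $\Hom^{\mathbb{S}}(\Lie^{\ac},\PL)=\prod_{n\ge 1}s^{1-n}(\PL(n)\otimes\sgn_n)^{\mathbb{S}_n}$. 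In the rooted-tree model $\PL(n)=\k\RT(n)$, the arity-$n$ component thus has a basis indexed by isomorphism classes of rooted trees on $n$ vertices in which every repeated sibling subtree has an even number of vertices (the rest are killed by antisymmetrisation); call these the admissible trees. A short computation with the functional equation for rooted trees identifies the number of admissible trees on $n$ vertices with the number of all rooted trees on $n-1$ vertices for $n\ge 2$ (and with $1$ for $n=1$). Under this identification the Maurer--Cartan element $\alpha$ is the antisymmetrised pre-Lie product --- the ``cherry'' --- and the differential $d_\alpha=[\alpha,-]$ is the operadic commutator with it, which on trees is an explicit alternating sum of the operations ``graft a new $\alpha$-leaf at a vertex'', ``put a new root carrying $\alpha$ on top'', and ``split a vertex in two and redistribute its children''. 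I would first write this differential out and check the low-arity part by hand.

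Next I would filter the complex by a combinatorial statistic of the trees, chosen so that the associated graded differential $d_0$ becomes tractable. The number of leaves is the natural first candidate: splitting a vertex or adding a root preserves it, while grafting an $\alpha$-leaf at an internal vertex raises it, so $d_0$ retains only the leaf-preserving part of $d_\alpha$. I expect $d_0$ --- or the leading term for a slightly finer statistic --- to reduce, after regrouping the trees by their ``trunk'', to a Koszul-type differential: essentially attaching or detaching a single $\alpha$-decorated leaf in a forced position, so that $(\mathrm{gr},d_0)$ splits as a direct sum of acyclic two-term complexes, the pairing being the bijection between rooted trees on $n-1$ vertices and admissible trees on $n$ vertices found above. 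Acyclicity of $(\mathrm{gr},d_0)$ then forces acyclicity of $\Def(\Lie\to\PL)$. (In the Koszul-dual picture $\Def(\Perm\to\Com)$ the differential becomes, up to suspension, a co-differential of pre-Lie type --- a sum over ways of contracting a two-vertex subtree --- and the same filtration reads as a filtration by the number of such contractions; this may be the more transparent of the two descriptions.)

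The main obstacle is precisely the choice of statistic together with the proof that the associated graded is acyclic and that nothing survives to the next page. Two points make this delicate. First, writing $d_\alpha$ honestly on antisymmetrised rooted trees is sign-heavy: every automorphism of an intermediate tree contributes a Koszul sign, so one must be careful about which summands cancel. Second, the naive ``delete a canonical leaf'' map is not a contracting homotopy on the nose; one has to set up the filtration so that it becomes one on the associated graded --- the natural candidate being ``delete the root when it has a unique child, and return zero otherwise''. As sanity checks: the homology must vanish in every arity, so in particular the arity-$1$ class (the ``unit/rescaling'' class flagged in Section~\ref{sec:DefKoszul}) must be a boundary, hit by the cherry in arity~$2$; and the alternating partial sums $\sum_{k\le n}(-1)^{n-k}m_k$ of the multiplicities $m_k$ of $\sgn_k$ in $\PL(k)$ must be nonnegative, which the sequence $1,1,1,2,4,9,\dots$ indeed satisfies. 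Finally, it is worth keeping the sequel in view: the complex above is essentially a layer of the twisted operad $\Tw(\PL)$ and a close relative of Markl's $\rPL$, so whatever filtration works here is probably the one reused in Section~\ref{sec:twisting}.
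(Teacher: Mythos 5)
Your setup is correct and agrees with the paper's: after the degree shift, the deformation complex is the underlying space of the free pre-Lie algebra on one generator of degree $-1$, i.e.\ a product over unlabelled rooted trees surviving antisymmetrisation, with $d_\alpha=[\alpha,-]$ acting as the graph-complex differential (split a vertex, add a root, add a leaf). But the proposal stops exactly where the real work begins, and the two ingredients you offer in place of that work are both flawed. First, the leaf-count filtration rests on a miscount of the differential: a vertex splitting in which \emph{all} children attach to the copy nearer the root creates a new leaf, and grafting an $\alpha$-leaf at an existing leaf does \emph{not} raise the leaf count; so the leaf-preserving part $d_0$ is not the clean ``edge-stretching'' operator you describe (it still contains splittings at core vertices), and no argument for its acyclicity is given. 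Second, the combinatorial identity underpinning your hoped-for decomposition into two-term complexes is false. A tree survives antisymmetrisation iff it has no repeated sibling subtree with an odd number of vertices, and the resulting multiplicities of $\sgn_n$ in $\PL(n)$ are $1,1,1,2,4,8,\dots$ (for $n=6$ one counts exactly $8$ admissible trees), not $1,1,1,2,4,9,\dots$; so there is no bijection with rooted trees on $n-1$ vertices, and the proposed pairing cannot exist as stated. (The genuinely necessary numerical condition, nonnegativity of the alternating partial sums, does hold for the correct sequence, so acyclicity is not contradicted --- it is just not proved.)

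The statistic that works --- the one the paper borrows from Willwacher --- is the size of the \emph{core}: delete from each tree its \emph{antennas}, the maximal connected chains of vertices of valence $1$ and $2$. In the associated graded of the filtration by core size, the surviving differential consists precisely of the terms that lengthen one antenna by one edge (including adding a new root below the root antenna and extending a leaf antenna), so the first page splits, after fixing a planar structure to kill automorphisms, as a tensor product over the nonempty set of antennas of standard acyclic one-antenna complexes; the non-planar complex is then the direct summand of invariants under tree automorphisms and is acyclic by Maschke and K\"unneth. If you want to complete your argument, replace the leaf count by the core size; neither the leaf statistic nor a global tree-by-tree pairing of basis elements will carry the proof.
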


\begin{proof}
Here, one uses the convolution algebra 
 $
\prod\limits_{n\ge 1} \Hom_{\mathbb{S}_n}(\Lie^{\ac}(n),\PL(n)) 
 $
with the differential $[\alpha,-]$; the element $\alpha$ sends $s[a_1,a_2]$ to $a_1\triangleleft a_2-a_2\triangleleft a_1$. Because of the isomorphisms
\begin{multline*}
\prod_{n\ge 1} \Hom_{\mathbb{S}_n}(\Lie^{\ac}(n),\PL(n)) \\ \cong \prod_{n\ge 1}\Hom_{\mathbb{S}_n}(s^{n-1}\sgn_n,\PL(n))\cong s\prod_{n\ge 2} \PL(n)\otimes_{\mathbb{S}_n} (\k s^{-1})^{\otimes n} , 
\end{multline*}
it is obvious that if we shift the homological degrees by one, that algebra may be identified with the underlying space of the free pre-Lie algebra generated by the element~$s^{-1}$. Elements of that space are combinations of unlabelled rooted trees (or, rather, rooted trees whose vertices are all labelled $s^{-1}$). The Maurer--Cartan element $\alpha$ in this case is the tree $\vcenter{
\xymatrix@M=3pt@R=7pt@C=5pt{
*+[o][F**]{}\ar@{-}[d]  \\
*+[o][F**]{} 
}}
$. The differential $d_\alpha$ is the ``usual'' graph complex differential \cite{MR1247289}: the image of each tree~$T$ is obtained as a sum of two terms:

-- the sum over all vertices of $T$ of all possible ways to split that vertex into two, and to connect the incoming edges of that vertex to one of the two vertices thus obtained, taken with the plus sign (corresponding to operadic insertions of the Maurer--Cartan element at vertices of~$T$):
 \[
\vcenter{
\xymatrix@M=2pt@R=2pt@C=2pt{
\ar@{-}[dr]&\ldots\ar@{-}[d]\ldots&\ar@{-}[dl]&\ar@{-}[dll]\\
&*++[o][F**]{}\ar@{-}[d]&&\\
&&
}}
\leadsto\quad \sum
\vcenter{
\xymatrix@M=2pt@R=2pt@C=2pt{
\ar@{-}[dr]&\ldots&\ar@{-}[dl]\, \, &\\
&*++[o][F**]{}\ar@{-}[dr]&\ldots\ar@{-}[d]&\ar@{-}[dl]&\\
&&*++[o][F**]{}\ar@{-}[d]&&&&\\
&&
}}
 \] 

-- grafting the tree $T$ at the new root, taken with the minus sign, and the sum of all possible ways to create one extra black leaf, taken with the plus sign (corresponding to operadic insertions of $T$ at vertices of the Maurer--Cartan element).

The acyclicity of this complex can be proved by the following version of an argument of Willwacher \cite[Prop.~3.4]{MR3348138}. Let us call an \emph{antenna} of a tree $T$ a maximal connected subtree consisting of vertices of valences $1$ and $2$ (in particular, each leaf of $T$ is included in its own antenna); a tree can be viewed as a ``core'' without vertices of valence $1$ with several antennas attached to it. Let us consider the filtration of our complex by the size of the core. The first differential of the corresponding spectral sequence is a sum of terms adding edges to individual antennas, and can be easily seen acyclic. Indeed, we may first consider planar rooted trees, fixing a total order on the children of each vertex; this forces the graph automorphisms to disappear, and the first page of our spectral sequence is the tensor product of acyclic complexes for (the non-empty set of) individual antennas; dealing with trees without a fixed planar structure amounts to taking the subcomplex of invariants with respect to graph automorphisms, which is acyclic since it splits as a direct summand due to the Maschke theorem. 
\end{proof}

Using Theorem \ref{th:Koszul}, we obtain the Koszul dual result.

\begin{corollary}\label{cor:LiePL}
The deformation complex of the quotient map $\Perm\to\Com$ is acyclic.
\end{corollary}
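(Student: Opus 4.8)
The plan is to deduce this directly from Theorem~\ref{th:LiePreLie} by means of the Koszul duality isomorphism of Theorem~\ref{th:Koszul}, exactly as Corollary~\ref{cor:PLCom} was deduced from Theorem~\ref{th:PLCom}. First I would recall from the discussion preceding Theorem~\ref{th:Koszul} that the Koszul dual of the inclusion $\Lie\to\PL$ is precisely the surjection $\Perm\to\Com$; this rests on the identifications $\PL^!\cong\Perm$ and $\Lie^!\cong\Com$, together with the explicit recipe for dualising a map of quadratic data $f_\circ$ into $f^!$. One should also check that $\Perm$ and $\Com$ satisfy the standing hypotheses of Section~\ref{sec:DefKoszul}, namely that they are reduced, connected, and finite-dimensional in each arity; this is immediate, since $\Perm(n)$ is the $n$-dimensional permutation representation and $\Com(n)=\k$.

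Next I would apply Theorem~\ref{th:Koszul} with $\calP=\Lie$, $\calQ=\PL$, and $f$ the canonical inclusion, obtaining an isomorphism of differential graded Lie algebras
\[
\Def(\Lie\to\PL)\cong\Def(\Perm\to\Com).
\]
Since Theorem~\ref{th:LiePreLie} asserts that the left-hand side is acyclic, the right-hand side is acyclic as well, which is exactly the assertion of the corollary.

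I do not expect a genuine obstacle here: all the real work is already contained in Theorem~\ref{th:LiePreLie}, and the present statement is a purely formal consequence. The only point worth a moment's care is the bookkeeping involved in identifying the Koszul dual map — one must be sure that no suspension or sign discrepancy is introduced in passing from $f\colon\calP\to\calQ$ to $f^!\colon\calQ^!\to\calP^!$ — but this has already been dealt with inside the proof of Theorem~\ref{th:Koszul}, where the Maurer--Cartan elements attached to $f$ and to $f^!$ are shown to correspond under the relevant identifications. Hence the proof reduces to a single invocation of the two cited results.
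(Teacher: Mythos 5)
Your proposal is exactly the paper's argument: the corollary is stated immediately after the sentence ``Using Theorem~\ref{th:Koszul}, we obtain the Koszul dual result,'' applied to Theorem~\ref{th:LiePreLie}, with the identification of the Koszul dual of $\Lie\to\PL$ as the surjection $\Perm\to\Com$ already recorded before Theorem~\ref{th:Koszul}. Nothing further is needed.
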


\subsection{The map to the associative operad}

The next result can be viewed as a toy model of a deeper result proved in Section \ref{sec:defPL}.

\begin{theorem}\label{th:PLAss}
The deformation complex of the quotient map $\PL\to\Ass$ is acyclic.
\end{theorem}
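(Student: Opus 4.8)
The plan is to imitate the argument for Theorem~\ref{th:LiePreLie}, using the combinatorial model for the deformation complex provided by rooted trees, but now keeping track of the associative structure. By Theorem~\ref{th:Koszul} the deformation complex $\Def(\PL\to\Ass)$ is isomorphic to $\Def(\Ass\to\Perm)$ (the Koszul dual map recorded in the third bullet point above), so I may work with whichever of the two is more convenient; I expect the $\PL\to\Ass$ side to be easier to describe combinatorially. Here one uses the convolution Lie algebra $\prod_{n\ge 1}\Hom_{\mathbb{S}_n}(\PL^{\ac}(n),\Ass(n))$ with differential $d_\alpha=[\alpha,-]$, where $\alpha$ sends $s(a_1\triangleleft a_2)$ to the associative product $a_1a_2$. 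Using the $\mathbb{S}_n$-module isomorphism $\PL^{\ac}(n)\cong s^{n-1}\sgn_n\otimes\Perm(n)$ together with $\Ass(n)\cong\k[\mathbb{S}_n]$, I would first rewrite this convolution algebra, after shifting homological degrees, as a space of rooted trees whose vertices carry a linear order on their children (equivalently, labelled rooted trees modulo the $\Perm$-relations, paired against the regular representation of $\Ass$); concretely it should become the free pre-Lie algebra structure underlying $\Ass$-dialgebras, i.e.\ a suitable space of planar-type rooted trees. The first step is thus to pin down this model precisely and to identify the differential $d_\alpha$ as the graph-complex differential: vertex-splitting terms (insertions of $\alpha$ at vertices of a tree $T$) taken with a plus sign, and the root-grafting / new-leaf terms (insertions of $T$ into $\alpha$) with the appropriate signs, exactly as in the proof of Theorem~\ref{th:LiePreLie} but now in the planar/ordered setting.

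The second and main step is to prove acyclicity of this tree complex by the same filtration trick used for the Lie case. I would introduce the ``core'' of a tree --- the part obtained by deleting all vertices of valence $1$ and $2$, leaving a tree with every internal vertex of valence $\ge 3$ together with a collection of antennas (maximal chains of uni- and bi-valent vertices) attached to it --- and filter the complex by the size of the core. On the associated graded, only the terms of $d_\alpha$ that add edges inside a single antenna survive; the first page of the spectral sequence then splits, over each fixed core, as a tensor product of elementary complexes indexed by the antennas. In the planar (ordered-children) version these elementary antenna complexes have no automorphisms and can be checked acyclic by a direct homotopy (adding an edge at the tip of the antenna, or contracting one), and the full first page of the spectral sequence is acyclic because the non-planar complex is the summand of invariants under the finite graph-automorphism group, hence a direct summand by Maschke. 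Since the spectral sequence is bounded below (the core size is a nonnegative integer bounded by the arity) and the first differential is acyclic, convergence gives acyclicity of $\Def(\PL\to\Ass)$.

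The point requiring the most care --- and the reason this is a genuine refinement of Theorem~\ref{th:LiePreLie} rather than a verbatim copy --- is the bookkeeping of signs and of the linear orders on children introduced by the target $\Ass$ rather than $\PL$. In the Lie case the target contributed only the regular representation of $\Lie$ and the trees were genuinely unlabelled; here the $\Ass$-factor decorates each vertex with an ordering of its incoming edges, and one must check that the two pieces of $d_\alpha$ interact with this ordering coherently, that splitting a vertex distributes the ordered children in the combinatorially correct way, and that the Koszul signs from the operadic suspension $\calS$ and from the suspension $s$ match up so that $d_\alpha^2=0$ and so that the homotopy on each antenna complex is well defined. Once the model and its signs are set up correctly, the filtration argument goes through essentially mechanically; I would therefore spend most of the write-up on establishing the combinatorial model and defer the spectral-sequence bookkeeping to a reference to the proof of Theorem~\ref{th:LiePreLie}. (As a sanity check, the homology in degrees $1$ and $2$ should reproduce the known fact that the map $\PL\to\Ass$ is rigid to first and second order, as in \cite{bao2020cohomological}.)
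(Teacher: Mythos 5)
There is a genuine gap, and it lies in the very first step: the identification of the underlying space of the convolution algebra. You write that $\prod_n\Hom_{\mathbb{S}_n}(\PL^{\ac}(n),\Ass(n))$ should become ``a suitable space of planar-type rooted trees''. It does not. The trees in the proof of Theorem~\ref{th:LiePreLie} come from the \emph{target} $\PL(n)$; here the source is $\PL^{\ac}(n)\cong s^{n-1}\sgn_n\otimes\Perm(n)$, which is only $n$-dimensional, and the target $\Ass(n)$ is the regular representation, so $\dim\Hom_{\mathbb{S}_n}(\PL^{\ac}(n),\Ass(n))=\dim\PL^{\ac}(n)=n$. A space of planar rooted trees on $n$ vertices would have dimension given by a Catalan number, which already exceeds $n$ at $n=4$. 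The correct identification (which you half-name and then abandon) is with the free \emph{associative dialgebra} on the single generator $s^{-1}$: its weight-$n$ component is spanned by the $n$ words $e_n^i$, the product of $n$ copies of $s^{-1}$ with the $i$-th factor underlined. There are no trees, hence no cores and no antennas: every basis element is a bare path, the core filtration is trivial, and its associated graded differential is the entire differential. The whole second step of your plan therefore has nothing to act on.

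With the correct model the complex is small enough to compute by hand, but the computation is not the ``obvious homotopy adds an edge at the tip'' that you propose. Working out $[\alpha,e_n^i]$ gives an answer that depends on the parities of $i$ and $n$ (for instance $d(e_n^i)=e_{n+1}^1$ for $i,n$ both odd, but $d(e_n^i)=e_{n+1}^i+e_{n+1}^1-e_{n+1}^{i+1}$ for $i$ even and $n$ odd), so no single shift map is a contracting homotopy. The acyclicity is instead seen by observing that the span of the elements $e_n^1$ is a subcomplex, isomorphic to the (acyclic) deformation complex of $\Lie\to\Ass$, and that the quotient by it is acyclic as well. Your framing of the setup (convolution algebra, the Maurer--Cartan element $\alpha$, the isomorphism $\PL^{\ac}(n)\cong s^{n-1}\sgn_n\otimes\Perm(n)$, the appeal to Theorem~\ref{th:Koszul} for the Koszul dual statement) is all correct; the proof fails at the point where you substitute a tree complex for a word complex and then rely on tree combinatorics that the actual complex does not possess.
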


\begin{proof}
Here, one uses the convolution algebra  
 $
\prod\limits_{n\ge 1} \Hom_{\mathbb{S}_n}(\PL^{\ac}(n),\Ass(n)) 
 $
 with the differential $[\alpha,-]$; the element $\alpha$ sends $s(a_1\triangleleft a_2)$ to $a_1\cdot a_2$. The $\mathbb{S}_n$-module isomorphism
 $
\PL^{\ac}(n)\cong s^{n-1}\sgn_n\otimes\Perm(n)
 $ 
implies that
 \[
\prod_{n\ge 1} \Hom_{\mathbb{S}_n}(\PL^{\ac}(n),\Ass(n))\cong s\prod_{n\ge 1} (\Ass(n)\otimes\Perm(n))\otimes_{\mathbb{S}_n} (\k s^{-1})^{\otimes n} ,
 \]
so if we shift the homological degrees by one, this convolution Lie algebra may be identified with the underlying space of the free associative dialgebra generated by the element~$s^{-1}$. The Maurer--Cartan element $\alpha$ in this case is binary product $\underline{a_1}a_2$ with the first element underlined. If we denote by $e_n^i$ the product of $n$ copies of $s^{-1}$ where the $i$-th factor is underlined, we have
\begin{align*}
d(e_n^i)&=
e_{n+1}^i+(-1)^{n-1}e_{n+1}^1-\sum_{k=1}^{i-1}(-1)^{k-1}e_{n+1}^{i+1}-\sum_{k=i}^{n}(-1)^{k-1}e_{n+1}^{i}\\
&=e_{n+1}^i+(-1)^{n-1}e_{n+1}^1-\frac{1+(-1)^i}{2}e_{n+1}^{i+1}+(-1)^{i}\frac{1+(-1)^{n-i}}{2}e_{n+1}^{i},
\end{align*}
or in other words,
 \[
d(e_n^i)=
\begin{cases}
e_{n+1}^1, \qquad\qquad\qquad\quad\! i \text{ odd }, n \text{ odd },\\
e_{n+1}^i-e_{n+1}^1,\qquad\qquad\, i \text{ odd }, n \text{ even }, \\
e_{n+1}^i+e_{n+1}^1-e_{n+1}^{i+1}, \quad\,\, i \text{ even }, n \text{ odd },\\
-e_{n+1}^{i+1}-e_{n+1}^1, \qquad\quad\,\, i \text{ even }, n \text{ even } .
\end{cases}
 \]
We note that the span of all $e_n^1$ is an acyclic subcomplex (it is in fact isomorphic to the deformation complex of the map $\Lie\to\Ass$), and the quotient by that subcomplex is also acyclic. 
\end{proof}

Using Theorem \ref{th:Koszul}, we obtain the Koszul dual result.

\begin{corollary}\label{cor:PLAss}
The deformation complex of the quotient map $\Ass\to\Perm$ is acyclic.
\end{corollary}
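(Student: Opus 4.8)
The plan is to deduce this immediately from Theorem~\ref{th:PLAss} by Koszul duality, with no new computation. First I would recall that $\PL^!\cong\Perm$ and that $\Ass$ is Koszul self-dual, $\Ass^!\cong\Ass$. Hence, applying the contravariant construction $f\mapsto f^!$ of Section~\ref{sec:DefKoszul} to the quotient map of quadratic data $f\colon\PL\to\Ass$ considered in Theorem~\ref{th:PLAss} produces a map of operads $f^!\colon\Ass^!\to\PL^!$, that is, a map $\Ass\to\Perm$. As indicated in the list of examples preceding Theorem~\ref{th:Koszul}, this map is the canonical surjection $\Ass\to\Perm$ obtained by imposing the permutative relation $a_1a_2a_3=a_1a_3a_2$.

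Next I would invoke Theorem~\ref{th:Koszul}, which furnishes an isomorphism of differential graded Lie algebras
\[
\Def(f\colon\PL\to\Ass)\cong\Def(f^!\colon\Ass\to\Perm).
\]
Since the left-hand side is acyclic by Theorem~\ref{th:PLAss}, the right-hand side is acyclic as well, which is exactly the statement of the corollary.

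The only point that requires a little attention is the identification of $f^!$ with the standard surjection $\Ass\to\Perm$: one traces through the definition of the Koszul dual of a morphism of quadratic data, namely the linear dual $f_\circ^*\colon\Ass^*(2)\to\PL^*(2)$ tensored with $s^{-1}\calS^{-1}$, and checks that the induced map of presentations sends the generating product of $\Ass$ to the generating product of $\Perm$ and is compatible with the relations. Because $\Perm(n)$ is the permutation representation while $\Ass(n)$ is the regular representation, this map is genuinely non-injective for $n\ge 3$, which is consistent with the remark before Theorem~\ref{th:Koszul}. I do not expect any real obstacle: the assertion is a purely formal consequence of the two theorems already proved, and the argument is entirely parallel to Corollaries~\ref{cor:PLCom} and~\ref{cor:LiePL}.
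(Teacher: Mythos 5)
Your argument is exactly the one the paper uses: the corollary is obtained by applying Theorem~\ref{th:Koszul} to the surjection $\PL\to\Ass$, whose Koszul dual is the canonical surjection $\Ass\to\Perm$ (as listed before Theorem~\ref{th:Koszul}), and then invoking the acyclicity from Theorem~\ref{th:PLAss}. Correct, and the same approach as the paper.
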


\section{Deformation complex of the pre-Lie operad}\label{sec:defPL}

In this section, we use intuition coming from examples of Section \ref{sec:maps} to prove the first ``serious'' rigidity theorem: the acyclicity of the deformation complex of the operad $\PL$. As mentioned in the introduction, this result implies that the group of homotopy automorphisms of the operad $\PL$ is the action of $\k^\times$ by intrinsic automorphisms: the element $\lambda\in\k^\times$ rescales each component $\PL(n)$ by~$\lambda^{n-1}$.

\begin{theorem}
The deformation complex of the identity map $\PL\to\PL$ is acyclic. 
\end{theorem}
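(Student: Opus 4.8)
The plan is to realise $\Def(\PL\to\PL)$ as a combinatorial graph complex and then strip it down by a filtration argument, exactly in the spirit of the proof of Theorem~\ref{th:LiePreLie}. By the Chapoton--Livernet description together with the isomorphism $\PL^{\ac}(n)\cong s^{n-1}\sgn_n\otimes\Perm(n)$, the underlying space of the convolution Lie algebra $\Hom^{\mathbb{S}}(\PL^{\ac},\PL)$ can be rewritten, after a shift of homological degree, as $\prod_{n\ge 1}\bigl(\RT(n)\otimes\Perm(n)\bigr)\otimes_{\mathbb{S}_n}(\k s^{-1})^{\otimes n}$, i.e.\ the underlying space of the free pre-Lie dialgebra on one generator. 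Concretely its elements are rooted trees, all of whose vertices carry the label $s^{-1}$, with one distinguished vertex (the ``underlined'' one coming from the $\Perm$-factor). The Maurer--Cartan element $\alpha$ is the corolla/product $s(a_1\triangleleft a_2)\mapsto a_1\triangleleft a_2$, i.e.\ the one-edge tree with the root underlined (or a small explicit combination once one chooses conventions for where the marking sits). The differential $d_\alpha=[\alpha,-]$ then has the same two flavours of terms as before: vertex-splitting terms coming from inserting $\alpha$ at a vertex of $T$, and root-grafting / extra-leaf-creation terms coming from inserting $T$ into $\alpha$, now with the extra bookkeeping of tracking the underlined vertex through each operation.

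Next I would set up the filtration. As in Willwacher's argument reproduced in the proof of Theorem~\ref{th:LiePreLie}, call an \emph{antenna} a maximal connected subtree of vertices of valence $1$ or $2$, and decompose a tree as a ``core'' (no valence-$1$ vertices) with antennas attached; filter by the size of the core. On the associated graded, the induced differential $d_0$ only adds edges inside individual antennas, so the $E_1$-page splits, over planar representatives, as a tensor product of elementary antenna complexes. The new feature compared with Theorem~\ref{th:LiePreLie} is the marked vertex: the marking either lies on the core, in which case it is inert for $d_0$ and the antenna computation is unchanged, or it lies on an antenna, in which case one gets a slightly larger but still explicitly computable one-variable complex — structurally this is precisely the ``associative dialgebra'' bookkeeping that already appeared in the proof of Theorem~\ref{th:PLAss}, where the analogous one-dimensional complex was shown acyclic (the span of the $e_n^1$ and its quotient). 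So I expect each antenna complex, with or without the marking, to be acyclic, hence $E_1=0$ after taking $\mathbb{S}$-invariants (which is harmless since over a field of characteristic zero invariants split off as a direct summand, by Maschke), and the spectral sequence collapses to give acyclicity.

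The main obstacle, and the place where genuine care is needed, is the precise form of $d_\alpha$ on the pre-Lie dialgebra and in particular how the underlined vertex is transported. In the plain pre-Lie (non-dialgebra) case of Theorem~\ref{th:LiePreLie} the differential is the familiar Kontsevich-style graph differential, but here the operadic insertions $\PL^{\ac}\to\PL^{\ac}\circ\PL^{\ac}$ on the cooperad side and $\PL\circ\PL\to\PL$ on the operad side must both be decorated by the $\Perm$-structure, and one has to check that the marking-tracking is compatible with the antenna filtration — i.e.\ that adding an edge to an antenna cannot move the marked vertex off that antenna onto the core, so that $d_0$ really does act antenna-by-antenna. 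Once that compatibility is verified, the remaining work is the finite, explicit verification that the elementary marked-antenna complex is acyclic, which should reduce to the same alternating-sum cancellations as in the display for $d(e_n^i)$ in the proof of Theorem~\ref{th:PLAss}. A secondary point to be careful about is signs: the extra-leaf-creation terms and the vertex-splitting terms each come with signs dictated by the Koszul rule applied to the degree-$(-1)$ generator $s^{-1}$, and one wants to confirm that the surviving $E_1$ differential is exactly the acyclic one and not accidentally zero.
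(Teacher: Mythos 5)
Your reduction of the complex to marked rooted trees (the free pre-Lie dialgebra on one generator, alias vertebrates) matches the paper's, but the filtration you propose next does not work as stated, and the obstruction sits exactly in the ``marking-tracking'' step you flag as needing care. The commutator differential $[\alpha,-]$ contains the term coming from inserting $T$ into the \emph{unmarked} slot of the Maurer--Cartan element: it grafts $T$ onto a brand-new root which becomes the special vertex of the result, and --- because insertion into an unmarked slot of a dialgebra forgets the mark of the inserted element --- it simultaneously erases the mark of $T$, wherever it was. This term preserves the core (the new root is univalent, so it only lengthens the root antenna), hence it survives into the associated graded differential $d_0$ of your core-size filtration; but it acts globally on the marking, relocating it from an arbitrary vertex (core or any antenna) to the root antenna. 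Consequently the $E_1$ page does \emph{not} split as a tensor product of independent marked and unmarked antenna complexes, and the claimed collapse fails. The analogy with the $e_n^i$ computation of Theorem~\ref{th:PLAss} is misleading here: in that linear case the whole tree is a single chain, so the term $(-1)^{n-1}e_{n+1}^1$ that relocates the mark to position $1$ is still ``local''. A secondary inaccuracy: a mark sitting on the core is not inert for $d_0$, since the special vertex also splits under the differential, with the mark passed to the lower of the two offspring.

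The paper circumvents exactly this problem by a two-step argument. First it splits off the subcomplex of trees whose special vertex is the root (this is where the troublesome term lands), identifies it, after forgetting the mark, with the graph complex of Theorem~\ref{th:LiePreLie}, and concludes it is acyclic. On the quotient the root-grafting term vanishes, and instead of the antenna filtration one filters by the length of the \emph{spine}, the path from the root to the special vertex; the associated graded then genuinely splits, via the K\"unneth formula, as tensor products of copies of the unmarked complex attached along the spine, with at least one acyclic factor because the root is normal. If you want to salvage your route, you must at minimum perform the paper's first reduction before invoking any antenna-type splitting, and then still verify that the remaining mark-moving terms respect whatever local decomposition you use.
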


\begin{proof}
Here, one works with the convolution algebra 
$
\prod\limits_{n\ge 1} \Hom_{\mathbb{S}_n}(\PL^{\ac}(n),\PL(n)) 
$
equipped with the differential $[\alpha,-]$; the element $\alpha$ sends $s(a_1\triangleleft a_2)$ to $a_1\triangleleft a_2$. Arguing as in the proof of Theorem \ref{th:PLAss}, we see that
 \[
\prod_{n\ge 1}\Hom_{\mathbb{S}_n}(\PL^{\ac}(n),\PL(n))\cong s\prod_{n\ge 1} (\PL(n)\otimes\Perm(n))\otimes_{\mathbb{S}_n} (\k s^{-1})^{\otimes n} ,
 \]
so if we shift the homological degrees by one, this convolution Lie algebra may be identified with the underlying space of the free pre-Lie dialgebra generated by the element~$s^{-1}$. For the reader who prefers a more combinatorial viewpoint, we would like to indicate that on the level of species, the set $\RT(n)\otimes\Perm(n)$ represents Joyal's vertebrates on $n$ labelled vertices \cite{MR633783}. Elements of that space are combinations of unlabelled rooted trees (or, rather, rooted trees whose vertices are all labelled $s^{-1}$) where one of the vertices (root or non-root) is distinguished; we call that vertex ``special'' and other vertices ``normal''. The Maurer--Cartan element $\alpha$ in this case is the tree $\vcenter{
\xymatrix@M=3pt@R=7pt@C=5pt{
*+[o][F**]{}\ar@{-}[d]  \\
*+[o][F-]{} 
}}
$ , where the black vertex is normal, and the white vertex is special: the distinguished vertex of the tree encoding the identity map of the operad $\PL$ is the root vertex. The differential $d_\alpha$ is similar to the usual graph complex differential: the image of each tree~$T$ is obtained as a sum of three terms:

-- the sum over all normal vertices of $T$ of all possible ways to split that vertex into two normal ones, and to connect the incoming edges of that vertex to one of the two vertices thus obtained, taken with the plus sign (corresponding to operadic insertions of the Maurer--Cartan element at normal vertices of~$T$):
 \[
\vcenter{
\xymatrix@M=2pt@R=2pt@C=2pt{
\ar@{-}[dr]&\ldots\ar@{-}[d]\ldots&\ar@{-}[dl]&\ar@{-}[dll]\\
&*++[o][F**]{}\ar@{-}[d]&&\\
&&
}}
\leadsto\quad \sum
\vcenter{
\xymatrix@M=2pt@R=2pt@C=2pt{
\ar@{-}[dr]&\ldots&\ar@{-}[dl]\, \, &\\
&*++[o][F**]{}\ar@{-}[dr]&\ldots\ar@{-}[d]&\ar@{-}[dl]&\\
&&*++[o][F**]{}\ar@{-}[d]&&&&\\
&&
}}
 \]

-- the sum over all possible ways to split the special vertex into two, and to connect the incoming edges of that vertex to one of the two vertices thus obtained, so that the one of the two vertices that is closer to root becomes the special vertex of the new tree (corresponding to the operadic insertion of the Maurer--Cartan element at the special vertex):
 \[
\vcenter{
\xymatrix@M=2pt@R=2pt@C=2pt{
\ar@{-}[dr]&\ldots\ar@{-}[d]\ldots&\ar@{-}[dl]&\ar@{-}[dll]\\
&*++[o][F-]{}\ar@{-}[d]&\\
&&
}}
\leadsto\quad \sum
\vcenter{
\xymatrix@M=2pt@R=2pt@C=2pt{
\ar@{-}[dr]&\ldots&\ar@{-}[dl]\, \, &\\
&*++[o][F**]{}\ar@{-}[dr]&\ldots\ar@{-}[d]&\ar@{-}[dl]&\\
&&*++[o][F-]{}\ar@{-}[d]&&&&\\
&&
}}
 \]	

-- grafting the tree $T$ at the new root that becomes the special vertex in the new tree, taken with the minus sign, and the sum of all possible ways to create one extra normal leaf, taken with the plus sign (corresponding to operadic insertions of~$T$ at vertices of the Maurer--Cartan element). 

To compute the homology of this complex, we shall argue in two steps. First, let us consider the space spanned by all trees whose special vertex is the root vertex (the degenerate vertebrates of \cite{MR633783}). It is clear that this space is a subcomplex, and forgetting about the speciality of the root identifies this subcomplex with the standard graph complex discussed in the proof of Theorem~\ref{th:LiePreLie}. Thus, it is acyclic.
The quotient by this subcomplex is spanned by the trees whose root vertex is normal. Let us consider the \emph{spine} of such a tree defined as the path connecting the root to the special vertex. Clearly, the differential is made of terms that preserve the length of the spine and terms that increase it. We may consider the filtration for which the associated graded differential preserves the length of the spine. That associated graded complex splits as a sum of complexes with the spine of given length $m$, and such summand is the $m$-fold tensor product of complexes corresponding to the individual trees attached along the vertices of the spine (this is a homological version of the relationship about vertebrates and rooted trees \cite[Ex.~9]{MR633783}). Each factor attached at a normal vertex is the standard graph complex discussed in the proof of Theorem~\ref{th:LiePreLie}, and since the root vertex is normal, there is at least one such factor. It remains to use the K\"unneth formula to complete the proof. 
\end{proof}

Using Theorem \ref{th:Koszul}, we obtain the Koszul dual result.

\begin{corollary}\label{cor:DefPerm}
The deformation complex of the identity map $\Perm\to\Perm$ is acyclic.
\end{corollary}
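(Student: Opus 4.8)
The plan is to deduce this corollary formally from Theorem~\ref{th:Koszul}, exactly as the phrase ``Using Theorem~\ref{th:Koszul}, we obtain the Koszul dual result'' suggests. I would apply that theorem in the special case $\calP = \calQ = \PL$ and $f = \id\colon \PL\to\PL$. Since $\PL^! = \Perm$, and since the construction $f\mapsto f^!$ recalled in Section~\ref{sec:DefKoszul} is functorial on the category of quadratic data (in particular it sends the identity morphism of quadratic data to the identity morphism), the Koszul dual of $\id_\PL$ is $\id_\Perm\colon\Perm\to\Perm$. Therefore Theorem~\ref{th:Koszul} produces an isomorphism of differential graded Lie algebras
\[
\Def(\id\colon\PL\to\PL)\;\cong\;\Def(\id\colon\Perm\to\Perm).
\]

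The previous theorem establishes that the left-hand side is acyclic, and acyclicity is obviously invariant under isomorphism of dg Lie algebras, so the right-hand side is acyclic as well, which is the claim. I do not expect any genuine obstacle in this argument; the only point that requires a moment's thought is the compatibility of Koszul duality of maps with the identity morphism, and this is a routine consequence of the naturality of $f\mapsto f^!$. One could alternatively verify it directly by unwinding the definitions: the Maurer--Cartan element attached to $\id_\PL$ corresponds, under the identifications in the proof of Theorem~\ref{th:Koszul}, to the Maurer--Cartan element attached to $\id_\Perm$, so the two twisted convolution Lie algebras literally coincide.
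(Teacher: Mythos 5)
Your proposal is correct and is exactly the paper's argument: the corollary is deduced from Theorem~\ref{th:Koszul} applied to $f=\id_{\PL}$, whose Koszul dual is $\id_{\Perm}$, together with the acyclicity of $\Def(\id\colon\PL\to\PL)$ established just before. The remark about the identity of quadratic data being sent to the identity under $f\mapsto f^!$ is the right (and only) point to check, and it holds as you say.
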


\section{Twisting of the pre-Lie operad}\label{sec:twisting}

General operadic twisting was defined by Willwacher \cite[Appendix~I]{MR3348138} to work with Kontsevich's graph complexes; it is an endofunctor of the category of differential graded operads equipped with a morphism from the operad of shifted $L_\infty$-algebras. There exists a counterpart of that endofunctor for operads equipped with a morphism from the operad~$L_\infty$, see \cite[Sec.~3.5]{MR3299688}. In particular, the general definition can be applied to a dg operad $(\calP,d_{\calP})$ equipped with a map of dg operads 
 $
f\colon(\Lie,0)\to(\calP,d_{\calP})
 $
that sends the generator of $\Lie$ to a certain binary operation of $\calP$ that we denote $[-,-]$. In this case, the operad $\Tw(\calP)$, the result of applying the twisting procedure to the operad $\calP$, is a differential graded operad that can be defined as follows~\cite{dotsenko2018twisting}. Denote by $\alpha$ a new operation of arity~$0$ and degree~$-1$. The underlying non-differential operad of $\Tw(\calP)$ is the completed coproduct $\widehat{\calP\vee\k\alpha}$. To define the differential, one performs two steps. First, one considers the operad
 $
\MC(\calP)=\left(\widehat{\calP\vee\k\alpha}, d_{\calP}+d_{\MC} \right)
 $
encoding dg $\calP$-algebras with a Maurer--Cartan element; its differential is the sum of the differential of $\calP$ and the differential $d_{\MC}$ which vanishes on $\calP$ and satisfies $d_{\MC}(\alpha)=-\frac12[\alpha,\alpha]$. The element $\ell_1^{\alpha}\in \MC(\calP)(1)$ defined by the formula
$
\ell_1^{\alpha}(a_1)=[\alpha,a_1] 
$
is an operadic Maurer--Cartan element of $\MC(\calP)$ which we can use to twist the differential of that operad. One puts 
 \[
\Tw(\calP)=\left(\widehat{\calP\vee\k\alpha}, d_{\Tw}=d_{\calP}+d_{\MC}+[\ell_1^{\alpha},-] \right).
 \]
The reason to be interested in the operad $\Tw(\calP)$ is the following. Suppose that $A$ is a dg $\calP$-algebra, and suppose that $\alpha$ is a Maurer--Cartan element of the algebra $A$ viewed as a Lie algebra.
As discussed in Section \ref{sec:DefKoszul}, one can twist the differential of $A$; the twisted dg Lie algebra $(A,d_\alpha)$ can in fact be extended to a $\Tw(\calP)$-algebra structure. It is worth noting that for the operation $[-,-]$ inside $\calP\subset\Tw(\calP)$ we have $d_{\Tw}([-,-])=0$ since the operation $[-,-]$ is annihilated by $d_{\calP}$ and satisfies the Jacobi identity. This means that there is a map of dg operads from $(\Lie,0)$ to $\Tw(\calP)$, so each $\Tw(\calP)$-algebra has a functorial dg Lie algebra structure. 

Let us remark that for each operad $\calP$ with zero differential, all operations in the image of $d_{\Tw}$ contain at least one occurrence of $\alpha$. Thus, the image of $\Lie$ in $\Tw(\calP)$ on the level of cohomology is isomorphic to the image of the map $f\colon\Lie\to\calP$. In case of the operad $\PL$, the map $\Lie\to\PL$ is injective (even the composite $\Lie\to\PL\to\Ass$ is injective) so the inclusion of dg operads $\Lie\rightarrow\Tw(\PL)$ is injective on the level of homology. We shall now prove that the homology of the operad $\Tw(\PL)$ is exhausted by the image of that inclusion. 

\begin{theorem}\label{th:twisting}
The inclusion of dg operads $(\Lie,0)\rightarrow\Tw(\PL)$ induces a homology isomorphism. 
\end{theorem}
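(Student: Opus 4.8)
The plan is to make the operad $\Tw(\PL)$ fully explicit by means of the Chapoton--Livernet description of $\PL$, and then to compute its homology, arity by arity, by a cascade of filtration arguments in the spirit of Section~\ref{sec:maps}. Since the underlying $\mathbb{S}$-module of $\PL$ is spanned by labelled rooted trees and $\alpha$ is a nullary operation, the underlying non-differential operad $\widehat{\PL\vee\k\alpha}$ is spanned, in arity $n$, by rooted trees with two kinds of vertices: $n$ \emph{white} vertices, in bijection with $\{1,\dots,n\}$ and of homological degree $0$, and an arbitrary number of \emph{black} vertices, unlabelled, of homological degree $-1$ (and therefore ordered up to a sign). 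Unravelling $d_{\Tw}=d_{\MC}+[\ell_1^\alpha,-]$ through the grafting formula turns the differential into a signed sum of elementary moves, each adjoining a single black vertex --- splitting a black vertex into two black vertices joined by an edge and redistributing the incoming subtrees, splitting a white vertex into a white and a black vertex (in either vertical order) and redistributing the incoming subtrees, attaching a new black leaf at an existing vertex, and grafting the whole tree under a new black root --- after which many of these terms cancel pairwise (for a tree with no black vertices at all the differential collapses dramatically). Recording this simplified differential with its signs is the first order of business, and under this identification the canonical inclusion carries $\Lie(n)$ onto the degree-$0$ subspace spanned by the white-only trees representing Lie elements of $\PL(n)$.

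The homology is then computed by two nested filtrations, patterned on the proof of the rigidity theorem for $\PL$ --- the ``degenerate vs.\ normal root'' dichotomy followed by the ``spine'' filtration. One first separates the \emph{decorative} black vertices --- those lying on no path from the root to a white vertex --- from the black vertices of the ``skeleton'' spanned by the white vertices; filtering by the number of decorative vertices, the leading differential of the associated graded is built, bush by bush, out of the graph-complex differential appearing in the proof of Theorem~\ref{th:LiePreLie} (after passing to planar representatives and taking graph-automorphism invariants via Maschke's theorem), so its homology is understood, and a K\"unneth argument trims the first page down to the subcomplex of trees whose black vertices all lie on the root-to-white paths. For this surviving complex one introduces the analogue of the ``spine'': filtering by the total length of the root-to-white paths, the associated graded splits, via the homological form of Joyal's vertebrate/rooted-tree correspondence already exploited in Section~\ref{sec:defPL}, as a tensor product --- over the edges of the skeleton --- of graph complexes together with the white ``core'', and a further application of the K\"unneth formula reduces the computation to the white-only part. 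The upshot is that the homology is concentrated in degree $0$; identifying it with $\Lie(n)$ is exactly the content of Markl's cohomological description of Lie elements in free pre-Lie algebras \cite{MR2325698}, here obtained in its natural home, and the vanishing of the homology in negative degrees is the answer to Markl's question on the higher homology of $\rPL$.

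The main obstacle, as usual with graph complexes, is organisational. One must verify that each of the terms of $d_{\Tw}$ surviving the cancellations either preserves or strictly increases every filtration degree --- so that the two spectral sequences are well defined and their leading differentials are the ones asserted --- keep scrupulous track of the Koszul signs coming from the odd black vertices, and eliminate graph automorphisms by passing to planar representatives, relying on Maschke's theorem exactly as in the proof of Theorem~\ref{th:LiePreLie}. The other genuinely non-formal point is the last one: that the degree-$0$ survivors are \emph{precisely} the Lie elements, and not a larger subspace of $\PL(n)$ --- this is where the pre-Lie identity is really used, and it is the substance of Markl's characterisation.
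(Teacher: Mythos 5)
Your strategy is genuinely different from the paper's: the paper proves this theorem by induction on arity, splitting $\Tw(\PL)(n)=V(n)\oplus W(n)$ according to the valence of the vertex labelled $1$, filtering by edge count so that the first differential is the injective map $W(n)\to V(n)$ that turns the vertex $1$ into an $\alpha$-vertex and sprouts a fresh univalent vertex $1$; the cokernel is a direct sum of $n-1$ copies of the arity-$(n-1)$ complex, and a dimension count $(n-1)\cdot(n-2)!=(n-1)!$ together with the a priori injectivity of $\Lie(n)\to H_\bullet(\Tw(\PL)(n))$ closes the induction (the only place Theorem~\ref{th:LiePreLie} enters is the acyclicity of the arity-$0$ component). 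Your proposal instead tries to transplant the two-step filtration used in Section~\ref{sec:defPL} for $\Def(\id_{\PL})$. As written it has two genuine gaps.

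First, the ``decorative bush'' reduction is not the antenna argument of Theorem~\ref{th:LiePreLie}. There the antennas are non-empty and the claim is acyclicity of each antenna complex; here the bush at a skeleton vertex must be allowed to be empty, and what you need is that each bush complex has \emph{one-dimensional} homology concentrated on the empty bush. That is a different and delicate statement: the bush differential receives contributions from splitting the skeleton vertex the bush is attached to, and these enter with different signs and multiplicities according to whether that vertex is white or an $\alpha$-vertex (at a white vertex the two ways of producing a new black leaf cancel, at a black skeleton vertex they add up), so the bush complexes at the two kinds of vertices are not even isomorphic, and it is not clear that their homology is what you need. None of this is covered by citing Theorem~\ref{th:LiePreLie}. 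Second, and more seriously, your spine filtration places every differential that adds a black vertex on a root-to-white path in strictly higher filtration degree, so on the associated graded the white-only trees sit in degree $0$ with \emph{zero} differential out of them; if the two spectral sequences really ``reduced the computation to the white-only part'', the answer would be $\PL(n)$, not $\Lie(n)$. The map that cuts $\PL(n)$ down to the Lie elements is precisely a differential you have pushed to a later, unidentified page, so the entire content of the theorem is hidden in higher differentials that your outline neither exhibits nor controls; invoking Markl's kernel description of $\Lie(V)\subset\PL(V)$ at the end does not repair this, since that description concerns the full differential on $\rPL(V)_0$ rather than anything visible on a page of your spectral sequences (and in the paper's logic Markl's statement is recovered as a consequence of this theorem, not used as an input). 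To salvage your approach you would have to either prove the one-dimensionality of the bush homology directly and then analyse the higher differentials of the spine spectral sequence explicitly, or switch to the paper's arity induction, which bypasses both issues.
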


\begin{proof}
This result is close to~\cite[Th.~3.6]{willwacher2017prelie}; its proof, like the one in \emph{loc. cit.}, mimics \cite[Lemma~8.5]{MR3220287}. We shall examine the differential more carefully and then argue by induction on arity. The arity $n$ component $\Tw(\PL)(n)$ is spanned by rooted trees with ``normal'' vertices labelled $1$, $\ldots$, $n$ and a certain number of ``special'' vertices labelled~$\alpha$. The differential in $\Tw(\PL)$ is similar to the usual graph complex differential: the image of each tree~$T$ is obtained as a sum of three terms:

-- the sum over all possible ways to split a normal vertex into a normal one retaining the label and a special one, and to connect the incoming edges of that vertex to one of the two vertices thus obtained, so that the term where the vertex further from the root retains the label is taken with the plus sign, and the other term is taken with the minus sign (corresponding to the operadic insertions of $\ell_1^{\alpha}$ at labelled vertices):
 \[
\vcenter{
\xymatrix@M=2pt@R=2pt@C=2pt{
\ar@{-}[dr]&\ldots\ar@{-}[d]\ldots&\ar@{-}[dl]&\ar@{-}[dll]\\
&*++[o][F-]{s}\ar@{-}[d]&\\
&&
}}
\leadsto\quad \sum
\vcenter{
\xymatrix@M=2pt@R=2pt@C=2pt{
\ar@{-}[dr]&\ldots&\ar@{-}[dl]\, \, &\\
&*++[o][F-]{s}\ar@{-}[dr]&\ldots\ar@{-}[d]&\ar@{-}[dl]&\\
&&*++[o][F-]{\alpha}\ar@{-}[d]&&&&\\
&&
}}\hspace{-6mm}
-\vcenter{
\xymatrix@M=2pt@R=2pt@C=2pt{
\ar@{-}[dr]&\ldots&\ar@{-}[dl]\, \, &\\
&*++[o][F-]{\alpha}\ar@{-}[dr]&\ldots\ar@{-}[d]&\ar@{-}[dl]&\\
&&*++[o][F-]{s}\ar@{-}[d]&&&&\\
&&
}}
 \]	

-- the sum over all special vertices of $T$ of all possible ways to split that vertex into two special ones, and to connect the incoming edges of that vertex to one of the two vertices thus obtained, taken with the plus sign (corresponding to computing the differential of the Maurer--Cartan element, since we have $d_{\Tw}(\alpha)=d(\alpha)+\ell_1^\alpha(\alpha)=-\frac12[\alpha,\alpha]+[\alpha,\alpha]=\frac12[\alpha,\alpha]=\alpha\triangleleft\alpha$): 
 \[
\vcenter{
\xymatrix@M=2pt@R=2pt@C=2pt{
\ar@{-}[dr]&\ldots\ar@{-}[d]\ldots&\ar@{-}[dl]&\ar@{-}[dll]\\
&*++[o][F-]{\alpha}\ar@{-}[d]&&\\
&&
}}
\leadsto\quad \sum
\vcenter{
\xymatrix@M=2pt@R=2pt@C=2pt{
\ar@{-}[dr]&\ldots&\ar@{-}[dl]\, \, &\\
&*++[o][F-]{\alpha}\ar@{-}[dr]&\ldots\ar@{-}[d]&\ar@{-}[dl]&\\
&&*++[o][F-]{\alpha}\ar@{-}[d]&&&&\\
&&
}}
 \]

-- grafting the tree $T$ at the new special root, taken with the minus sign, and the sum of all possible ways to create one extra special leaf, taken with the plus sign (corresponding to operadic insertions of the tree $T$ at the only vertex of $\ell_1^{\alpha}$).

In particular, we note that $\Tw(\PL)(0)$ is, as a complex, isomorphic to the deformation complex of the inclusion $\Lie\to\PL$ from Theorem \ref{th:LiePreLie}, and as such is acyclic. Consider some arity $n>0$, and the decomposition 
 $
\Tw(\PL)(n)=V(n)\oplus W(n),
 $ 
 where $V(n)$ is spanned by the trees where the normal vertex with label~$1$ has less than two incident edges, and $W(n)$ is spanned by the trees where the normal vertex with label~$1$ has at least two incident edges. The differential has components mapping $V(n)$ to $V(n)$, mapping $W(n)$ to $V(n)$, and mapping $W(n)$ to $W(n)$. We consider the filtration $F^\bullet\Tw(\PL)(n)$ for which $F^pV(n)$ is spanned by trees from $V(n)$ with at least $p$ edges and $F^pW(n)$ is spanned by trees from $W(n)$ with at least $p+1$ edges. In the associated spectral sequence, the first differential is merely the part of the full differential that maps $W(n)$ to $V(n)$; it takes the normal vertex labelled $1$ in~$T$, makes this vertex special, and creates a new normal univalent vertex labelled~$1$ that is connected to~$v$:
 \[
\vcenter{
\xymatrix@M=2pt@R=2pt@C=2pt{
\ar@{-}[dr]&\ldots\ar@{-}[d]&\ar@{-}[dl]\\
&*++[o][F-]{1}\ar@{-}[d]&&\\
&&
}}
\mapsto 
\vcenter{
\xymatrix@M=2pt@R=2pt@C=2pt{
&*++[o][F-]{1}\ar@{-}[drr]&\ar@{-}[dr]&\ldots\ar@{-}[d]&\ar@{-}[dl]&\\
&&&*++[o][F-]{\alpha}\ar@{-}[d]&&&&\\
&&&
}} .
 \]
This map is clearly injective. For $n=1$ its cokernel is spanned by the single one-vertex tree with its normal vertex labelled~$1$, proving that $\Lie(1)\to\Tw(\PL)(1)$ is a quasi-isomorphism. For $n>1$, the cokernel is spanned by the trees $T$ for which the normal vertex labelled $1$ is univalent and connected to another normal one.  It can be thus split into a direct sum of subcomplexes according to the number $k$ of that latter normal vertex; the number of such subcomplexes in arity $n$ is equal to $n-1$. We may proceed by induction by erasing the normal vertex labelled $1$: each of these subcomplexes is assumed to have homology $\Lie(n-1)$ of dimension $(n-2)!$, so the total dimension of homology of $\Tw(\PL)$ in arity $n$ is $(n-1)!$ which is the same as the dimension of $\Lie(n)$, implying that the inclusion of the operad $\Lie$ in $H_\bullet(\Tw(\PL))$ is an isomorphism. 
\end{proof}	

\section{Applications and further results}

\subsection{Lie elements in pre-Lie algebras}\label{sec:LPL}

We shall now recall Markl's criterion for Lie elements in the free pre-Lie algebra $\PL(V)$ generated by a vector space $V$~\cite{MR2325698}, and discuss its relationship to Theorem \ref{th:twisting}. To state that criterion, one builds the pre-Lie algebra $\rPL(V)$ (where `r' stands for `reduced') defined by the formula
 \[
\rPL(V):=\frac{\PL(V\oplus\k\circ)}{(\circ\triangleleft\circ)},
 \]
where $\circ$ is an additional generator of degree $-1$. According to \cite[Prop.~3.2]{MR2325698}, there is a well defined map $d\colon\rPL(V)\to\rPL(V)$ of degree~$-1$ that annihilates all generators and satisfies $d^2=0$, but, rather than making $\rPL(V)$ a dg pre-Lie algebra, the condition
 \[
d(a\triangleleft b)=d(a)\triangleleft b+(-1)^{|a|} a\triangleleft d(b)+Q(a,b) 
 \]
holds for $Q(a,b)=(\circ\triangleleft a)\triangleleft b-\circ\triangleleft (a\triangleleft b)$. 

\begin{theorem}[{\cite[Th.~3.3]{MR2325698}}]
The subspace of Lie elements in $\PL(V)$ equals the kernel of $d$ on the space of degree~$0$ elements $\rPL(V)_0\cong\PL(V)$:
 \[
\Lie(V)\cong \ker(d\colon\rPL(V)_0\to\rPL(V)_{-1}).
 \]
\end{theorem}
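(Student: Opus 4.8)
The plan is to recognise Markl's complex as the bottom two layers of the operad $\Tw(\PL)$ and to read the theorem off Theorem~\ref{th:twisting}. Write $\Tw(\PL)(V)$ for the free $\Tw(\PL)$-algebra on $V$, i.e.\ the span of rooted trees with ``normal'' vertices labelled by elements of $V$ and ``special'' vertices labelled $\alpha$, equipped with $d_{\Tw}$. The substitution $\circ\mapsto\alpha$ identifies the degree~$0$ and degree~$-1$ parts of $\rPL(V)$ with those of $\Tw(\PL)(V)$: since $\circ$ and $\alpha$ both have degree $-1$, degree~$0$ means ``no $\circ$'', so $\rPL(V)_0=\PL(V)=\Tw(\PL)(V)_0$, and degree~$-1$ means ``exactly one $\circ$''. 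The defining relation $\circ\triangleleft\circ$ is supported in degrees $\le-2$ (every element of the ideal $(\circ\triangleleft\circ)$ involves at least two occurrences of $\circ$), so it is invisible in degrees $0$ and $-1$; no completion subtlety arises there either, since only finitely many $\alpha$'s occur.

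The first real step is to match the two differentials. On the $0$-$\alpha$ part one has $d_{\calP}=0$ and $d_{\MC}=0$, so $d_{\Tw}$ restricted to $\PL(V)=\Tw(\PL)(V)_0$ is just the operadic insertion $[\ell_1^{\alpha},-]$, and it lands in the $1$-$\alpha$ part. Unwinding the three-term combinatorial description of $d_{\Tw}$ recorded in the proof of Theorem~\ref{th:twisting} — splitting a normal vertex into a normal and a special one with the prescribed signs, the (empty) sum over special vertices, and grafting at a new special root together with the creation of an extra special leaf — one checks, reading $\alpha$ as $\circ$, that $d_{\Tw}$ annihilates the generators in $V$ and satisfies $d_{\Tw}(a\triangleleft b)=d_{\Tw}(a)\triangleleft b+(-1)^{|a|}a\triangleleft d_{\Tw}(b)+\big((\circ\triangleleft a)\triangleleft b-\circ\triangleleft(a\triangleleft b)\big)$; that is, $d_{\Tw}$ agrees with Markl's operator $d$ on $\rPL(V)$. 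This is exactly the ``de-mystification'' announced in the introduction: the auxiliary non-derivation $d$ of $\rPL$ is, up to signs, the twisting differential $[\ell_1^{\alpha},-]$, governed on generators by $\ell_1^{\alpha}=[\alpha,-]$.

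With this in place the theorem is immediate. The complex $\Tw(\PL)(V)$ is concentrated in non-positive degrees with degree-$0$ part $\PL(V)$, so $H_0\big(\Tw(\PL)(V)\big)=\ker\!\big(d_{\Tw}\colon\PL(V)\to\Tw(\PL)(V)_{-1}\big)$. By Theorem~\ref{th:twisting} and exactness of Schur functors in characteristic zero (Maschke's theorem), $H_\bullet\big(\Tw(\PL)(V)\big)\cong\Lie(V)$, concentrated in degree $0$, via the map induced by the dg operad inclusion $(\Lie,0)\hookrightarrow\Tw(\PL)$; as that inclusion factors through $\PL\subset\Tw(\PL)$, the image of $\Lie(V)$ inside $\Tw(\PL)(V)_0=\PL(V)$ is precisely the subspace of Lie elements. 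Hence the subspace of Lie elements equals $\ker(d_{\Tw})$, which by the previous paragraph equals $\ker\big(d\colon\rPL(V)_0\to\rPL(V)_{-1}\big)$. One may equally run this in the multilinear component, getting $\Lie(n)=H_0(\Tw(\PL)(n))=\ker\big(d_{\Tw}\colon\PL(n)\to\Tw(\PL)(n)_{-1}\big)$ and then taking direct sums, which avoids invoking Schur-functor exactness and any completion.

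I expect the only genuinely non-formal step to be the sign-bookkeeping in the second paragraph: checking that the conventions in the combinatorial description of $d_{\Tw}$ (which sign attaches to splitting a vertex ``towards'' versus ``away from'' the root, and to grafting versus adding a leaf) reproduce Markl's $Q$-twisted Leibniz rule on the nose rather than up to an overall rescaling. Everything else is a direct consequence of Theorem~\ref{th:twisting} and the elementary fact that the defining relation of $\rPL(V)$ does not reach degrees $0$ and $-1$.
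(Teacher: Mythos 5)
Your argument is correct, and it is a genuine proof of the statement rather than a verification of the paper's: the paper itself only \emph{quotes} this theorem from Markl's article (where it is proved by direct pre-Lie-algebraic combinatorics) and then, in Section~\ref{sec:LPL}, proves the stronger statement that $\rPL\cong\Tw(\PL)/(\alpha\triangleleft\alpha)$ with homology $\Lie$. What you do differently is bypass the quotient operad entirely: since every element of the ideal $(\circ\triangleleft\circ)$ has degree $\le -2$, the degrees $0$ and $-1$ of $\rPL(V)$ already live inside $\Tw(\PL)(V)$, so Markl's criterion falls out of Theorem~\ref{th:twisting} alone, with no need to re-run the spectral sequence for the associated graded of the ideal filtration. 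All the load-bearing steps check out: $H_0$ of a non-positively graded complex is a kernel, not a subquotient; the number of $\alpha$'s is fixed in each homological degree so the completion is harmless; the inclusion $\Lie\to\Tw(\PL)$ factors through $\PL$, so its image in degree $0$ is exactly the subspace of Lie elements; and both $d$ and $d_{\Tw}$ are determined on $\PL(V)$ by vanishing on generators together with the same $Q$-twisted Leibniz rule, which reduces the sign check to the single identity $d_{\Tw}(-\triangleleft-)=Q(-,-)$ that the paper records as a displayed formula (and which the three-term combinatorial description of $d_{\Tw}$ does reproduce on the nose after the expected cancellations). The one thing you might state more explicitly is well-definedness: Markl's $d$ exists by \cite[Prop.~3.2]{MR2325698}, while $d_{\Tw}$ is an operadic derivation by construction, and it is the coincidence of the two characterisations that lets you identify them.
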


In fact, as explained in \emph{loc. cit.}, one can define a differential graded operad $\rPL$ and view the chain complex $(\rPL(V),d)$ as the result of evaluating the Schur functor corresponding to differential graded $\mathbb{S}$-module $\rPL$ on the vector space $V$. Armed with this understanding and recalling that $d(a\triangleleft b)-d(a)\triangleleft b-(-1)^{|a|} a\triangleleft d(b)$ is simply the operadic differential $\partial(-\triangleleft-)=[d,-\triangleleft-]$ evaluated on $a\otimes b$, we infer that the differential of the dg operad $\rPL$ is fully defined by its value on the pre-Lie product $(-\triangleleft-)$ which is equal to $Q(-,-)$. We now note that in terms of rooted trees one has
 \[
Q(a,b)=(\circ\triangleleft a)\triangleleft b-\circ\triangleleft (a\triangleleft b)=
\vcenter{
\xymatrix@M=3pt@C=5pt@R=10pt{
*+[o][F-]{a}\ar@{-}[dr] &&  *+[o][F-]{b}\ar@{-}[dl] \\
& *+[o][F-]{} & 
}} .
 \]
This brings us very close to the key revelation: in the operad $\Tw(\PL)$, we have
 \[
d_{\Tw}\left(\,\,\vcenter{
\xymatrix@M=3pt@R=7pt@C=5pt{
*+[o][F-]{2}\ar@{-}[d]  \\
*+[o][F-]{1} 
}}\,\,
\right) = 
\vcenter{
\xymatrix@M=3pt@C=5pt@R=10pt{
*+[o][F-]{1}\ar@{-}[dr] &&  *+[o][F-]{2}\ar@{-}[dl] \\
& *+[o][F-]{\alpha} & 
}} ,
 \]
so one must think of the element $\circ$ as a shadow of the element $\alpha\in\Tw(\PL)$. Now we are ready to make the connection precise. We note that the operadic ideal of $\Tw(\PL)$ generated by $\alpha\triangleleft\alpha$ is closed under differential, and so one may consider the filtration by powers of that ideal and the associated graded chain complex. An argument identical to that in the proof of Theorem~\ref{th:twisting} can be used to show that the homology of the associated graded complex is already isomorphic to the operad $\Lie$. In particular, one can prove the following result, solving Markl's problem \cite[Problem~7.6]{MR2325698}. 

\begin{theorem}
The quotient dg operad $\Tw(\PL)/(\alpha\triangleleft\alpha)$ is isomorphic to the dg operad $\rPL$, and the morphism
$(\Lie,0)\to \rPL$
induces a homology isomorphism.
\end{theorem}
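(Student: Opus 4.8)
The plan is to establish the isomorphism of dg operads directly and then deduce the homology statement from it. First I would verify that $\Tw(\PL)/(\alpha\triangleleft\alpha)$ really is a dg operad: the operadic ideal generated by $\alpha\triangleleft\alpha$ must be closed under $d_{\Tw}$, which follows since $d_{\Tw}(\alpha\triangleleft\alpha)$ is (up to sign) the Leibniz expansion $d_{\Tw}(\alpha)\triangleleft\alpha + \alpha\triangleleft d_{\Tw}(\alpha)$ plus the graph-complex terms splitting each $\alpha$-vertex, and every such term again contains a sub-configuration $\alpha\triangleleft\alpha$ — so the ideal is a dg ideal. Then I would set up the comparison map. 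On the underlying non-differential operads, $\Tw(\PL) = \widehat{\PL\vee\k\alpha}$, and quotienting by $(\alpha\triangleleft\alpha)$ identifies $\Tw(\PL)/(\alpha\triangleleft\alpha)$ with $\widehat{\PL(\text{normal vertices}\sqcup\k\circ)}/(\circ\triangleleft\circ)$ where $\circ$ plays the role of $\alpha$; this is exactly the underlying $\mathbb{S}$-module of $\rPL$ as described in Markl's construction (the free pre-Lie algebra on the colours plus one degree $-1$ generator $\circ$, modulo $\circ\triangleleft\circ$). Under this identification I would check that the differential matches: the key computation, already displayed in the excerpt, is that $d_{\Tw}$ applied to the pre-Lie generator $(-\triangleleft-)$ yields the corolla with root labelled $\alpha$, i.e. precisely $Q(-,-)$; since both differentials annihilate all the normal generators and $\alpha\leftrightarrow\circ$, and both are operadic derivations once we account for the $L_\infty$/Leibniz correction term, they agree on all of $\rPL$. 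This gives the isomorphism $\Tw(\PL)/(\alpha\triangleleft\alpha)\cong\rPL$ of dg operads.

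For the second assertion, that $(\Lie,0)\to\rPL$ is a homology isomorphism, I would run the filtration-and-induction argument of the proof of Theorem~\ref{th:twisting} verbatim on the quotient. Concretely: the arity $0$ component of $\Tw(\PL)/(\alpha\triangleleft\alpha)$ is spanned by rooted trees in $\alpha$-vertices with no two $\alpha$-vertices in a "$\triangleleft$" configuration adjacent in the forbidden way — but in fact modding out by $(\alpha\triangleleft\alpha)$ kills essentially everything, leaving the same acyclic (or rank-one) complex structure; more importantly, in arity $n>0$ one uses the same decomposition $\Tw(\PL)(n)/(\alpha\triangleleft\alpha) = V(n)\oplus W(n)$ according to whether the normal vertex labelled $1$ is univalent (with $\le 1$ incident edge) or higher-valent, the same filtration by number of edges, and the same first differential that makes the normal vertex $1$ special and sprouts a new univalent normal $1$. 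Because passing to the quotient only removes trees containing $\alpha\triangleleft\alpha$ and the first differential never creates such a pattern (it moves a single label, leaving the $\alpha$-skeleton untouched), the map is still injective with cokernel spanned by trees whose vertex $1$ is a leaf attached to another normal vertex $k$; erasing that leaf identifies each summand with arity $n-1$, and induction gives $\dim H_\bullet = (n-1)! = \dim\Lie(n)$.

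The main obstacle I anticipate is making the identification of differentials on $\rPL$ genuinely rigorous rather than heuristic: Markl's $d$ on $\rPL(V)$ is not a derivation for $\triangleleft$ — it satisfies the corrected Leibniz rule with the extra term $Q(a,b)$ — whereas $d_{\Tw}$ is a genuine operadic differential (a derivation) on $\Tw(\PL)$. The resolution is precisely the observation flagged in the excerpt: the "non-derivation" term $Q$ on the Schur-functor side is exactly what the genuine operadic derivation $d_{\Tw}$ contributes on the generator $(-\triangleleft-)$, because $\partial(-\triangleleft-) = [d, -\triangleleft-] = Q$, so there is no contradiction — one must simply be careful to phrase everything on the level of the dg operad $\rPL$ (where the differential \emph{is} an operadic derivation with prescribed value $Q$ on the generator) rather than on the Schur functor $\rPL(V)$. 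Once that bookkeeping is settled, checking that the two operadic derivations agree reduces to comparing their values on the single binary generator, which is the displayed computation. A secondary, purely technical point is justifying that the spectral sequences for these filtrations converge — but this is immediate since in each fixed arity the complexes are finite-dimensional in each homological degree, so convergence is automatic, exactly as in Theorem~\ref{th:twisting}.
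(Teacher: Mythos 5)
Your first half --- the identification $\Tw(\PL)/(\alpha\triangleleft\alpha)\cong\rPL$ --- is correct and is essentially how the paper sees it: the operadic ideal generated by $\alpha\triangleleft\alpha$ is a dg ideal, the underlying (non-differential) operads match, and since both differentials are operadic derivations vanishing on $\alpha$ (resp.\ $\circ$) and on the remaining generators, it suffices to compare them on the single binary generator, where the displayed computation produces exactly $Q(-,-)$. Your remark that Markl's ``corrected Leibniz rule'' is just the Schur-functor shadow of a genuine operadic derivation with prescribed value on the generator is the right way to dispose of that apparent tension. For the second half the paper is deliberately terse: it filters $\Tw(\PL)$ by powers of the ideal $(\alpha\triangleleft\alpha)$ and asserts that an argument identical to that of Theorem~\ref{th:twisting} computes the homology of the associated graded to be $\Lie$; your plan of running that argument directly on the bottom layer $\rPL$ of that filtration is the same idea, slightly more directly packaged.

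There is, however, a concrete error in your justification of the key step. You claim the first differential of the edge-count filtration ``never creates such a pattern'' because it ``moves a single label, leaving the $\alpha$-skeleton untouched.'' That is false: this differential converts the normal vertex labelled $1$ into a \emph{special} vertex (and sprouts a new normal leaf labelled $1$), so it enlarges the $\alpha$-skeleton, and whenever the vertex labelled $1$ had a special parent or a special child in $T$, its image acquires an edge between two special vertices and hence is killed (at least in part) in the quotient by $(\alpha\triangleleft\alpha)$. Consequently the induced map $\overline{W(n)}\to\overline{V(n)}$ is not injective as it stands --- its kernel contains the classes of all trees in which the vertex labelled $1$ is adjacent to a special vertex --- and the cokernel is correspondingly larger than the one you describe. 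So the argument of Theorem~\ref{th:twisting} cannot be run ``verbatim''; one must either refine the filtration further (for instance, by additionally keeping track of special--special adjacencies created at the distinguished vertex) or match the extra kernel classes against the extra cokernel classes on a later page of the spectral sequence. This is precisely the point where the paper's own ``identical argument'' also requires care, so the gap does not invalidate the strategy, but as written your injectivity claim rests on a false statement and is the one step that genuinely needs to be supplied.
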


\subsection{Natural operations on deformation complexes}

As we mentioned earlier, one motivation for studying the operad $\Tw(\calP)$ comes from the search of operations that are naturally defined on twisted dg Lie algebras coming from dg $\calP$-algebras. Since deformation complexes of operad maps discussed in Section \ref{sec:DefKoszul} arise from pre-Lie algebras, the twisted dg Lie algebra structure on each of them is in fact a part of a $\Tw(\PL)$-algebra structure. Theorem \ref{th:twisting} shows that the dg Lie algebra structure is the only homotopy invariant structure one may define \emph{from this viewpoint}. It is natural to ask if there are other homotopy invariant operations on deformation complexes that one may exhibit. A general framework to studying natural operations on the deformation complex of the identity map of an algebra over a Koszul operad was proposed by Markl in \cite{MR2309979}, who defined the most general dg operad $\calB_\calP$ acting on such a complex. That operad is not very manageable in general, as it is spanned by operations indexed by certain decorated trees labelled additionally by elements 
 \[
\Phi_{a_1,\ldots,a_n,a}\in\calP^{\ac}(a_1)\otimes\cdots\otimes\calP^{\ac}(a_n)\otimes\calP^{\ac}(a)
 \]
for appropriate values of arities $a_1,\ldots,a_n,a$. 

There are exactly two situations where the definition of the operad $\calB_\calP$ simplifies rather drastically: the cases $\calP=\Lie$ and $\calP=\Ass$. In each of these cases, the components of $\calP^{\ac}$ are, in sense, one-dimensional: for the Lie operad, that is literally true, while for the associative operad, the components of the Koszul dual are free modules of rank one over the group algebras of symmetric groups, and the symmetric group action can be ``hidden'' in the decorations by means of making the trees planar. An exhaustive study of the operad $\calB_{\Ass}$ was undertaken by Batanin and Markl \cite{MR3261598} who established that this operad has the homotopy type of the operad of singular chains on the little disks operad, thus establishing what perhaps is the strongest form of the celebrated Deligne conjecture stating that the Gerstenhaber algebra structure of the Hochschild cohomology comes from the action of the operad of singular chains on the little disk operad on the level of the deformation complex. By contrast, the operad $\calB_{\Lie}$ has not been carefully studied before; to the best of our knowledge, the only situation where that operad was used is the work of Markl on Lie elements in free pre-Lie algebras. Markl conjectured \cite[Conj.~7]{MR2309979} that the operad $\calB_{\Lie}$ has the homotopy type of the operad $\Lie$; we shall now outline a proof of this conjecture. 

\begin{theorem}[Lie version of the Deligne conjecture]
The operad of natural operations on deformation complexes of Lie algebras has the homotopy type of the operad $\Lie$.
\end{theorem}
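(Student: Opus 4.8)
The plan is to identify the operad $\calB_{\Lie}$ of natural operations on deformation complexes of Lie algebras with (a completion of) the operad $\Tw(\PL)$, and then invoke Theorem~\ref{th:twisting}. The starting point is the observation already made in this section: the deformation complex of the identity map of a Lie algebra $\mathfrak{g}$ is the convolution Lie algebra $\Hom^{\mathbb{S}}(\Lie^{\ac},\End_{\mathfrak g})$, and this is the Lie algebra underlying a genuine (convolution) pre-Lie algebra, hence carries a functorial $\PL$-algebra structure; moreover the Maurer--Cartan element encoding the Lie bracket on $\mathfrak g$ itself twists this pre-Lie algebra exactly in the sense of Section~\ref{sec:twisting}. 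Thus every deformation complex of a Lie algebra is naturally a $\Tw(\PL)$-algebra, which gives a map of dg operads $\Tw(\PL)\to\calB_{\Lie}$ (more precisely, landing in a suitable completion, as $\calB_{\Lie}$ is built from infinite sums indexed by decorated trees).

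First I would make Markl's operad $\calB_{\Lie}$ completely explicit, using the simplification noted above: since $\Lie^{\ac}(n)$ is one-dimensional, the decorations $\Phi_{a_1,\ldots,a_n,a}\in\Lie^{\ac}(a_1)\otimes\cdots\otimes\Lie^{\ac}(a_n)\otimes\Lie^{\ac}(a)$ carry no information beyond the arities, so the generators of $\calB_{\Lie}$ are indexed by decorated trees with no extra labels — exactly the kind of rooted-tree data that appears in $\Tw(\PL)$, with the ``special'' $\alpha$-vertices of $\Tw(\PL)$ corresponding to the inputs coming from the Maurer--Cartan element $b_\alpha\in\Lie^{\ac}$. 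Second, I would check that under this dictionary the operadic differential on $\calB_{\Lie}$ — which is assembled from the cooperad structure maps of $\Lie^{\ac}$ — matches the three-term graph-complex differential $d_{\Tw}$ on $\Tw(\PL)$ described in the proof of Theorem~\ref{th:twisting}: the splitting of a normal vertex with a sign, the splitting of a special vertex (the $\alpha\triangleleft\alpha$ term coming from $d_{\MC}(\alpha)=-\tfrac12[\alpha,\alpha]$), and the grafting/adding-a-leaf term. Third, having established an isomorphism (or at least a quasi-isomorphism) of dg operads $\widehat{\Tw(\PL)}\xrightarrow{\ \sim\ }\calB_{\Lie}$, I would conclude: by Theorem~\ref{th:twisting} the homology of $\Tw(\PL)$ is $\Lie$, the completion does not affect homology in each fixed arity (everything is of finite type once we fix the arity $n$ of the Lie-algebra inputs, since the number of $\alpha$-vertices is bounded in each homological degree), and hence $\calB_{\Lie}$ has the homotopy type of $\Lie$. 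The map $\Lie\to\calB_{\Lie}$ realizing this is the tautological one sending the bracket to the shifted Lie bracket on the deformation complex, which factors through $\Lie\hookrightarrow\Tw(\PL)$.

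The main obstacle I expect is the bookkeeping in the second step: matching signs and the precise combinatorics of Markl's tree-indexed operad $\calB_{\calP}$ (as set up in \cite{MR2309979}) with the twisting formalism of Section~\ref{sec:twisting}. Markl's operad is presented via ``Koszul-dual-cooperad-decorated'' trees with a differential built from cooperad (co)compositions, whereas $\Tw(\PL)$ is presented operadically; reconciling these requires unwinding the identification of the convolution pre-Lie algebra with Joyal's vertebrates and tracking how the internal differential $d$ on the cotangent complex (the ``$\partial$'' coming from $\alpha$) interacts with the operadic composition that assembles $\calB_{\Lie}$. A secondary subtlety is the completion: one must argue that passing to $\widehat{\calP\vee\k\alpha}$ on the $\Tw$ side corresponds exactly to allowing the infinite sums over decorated trees that define $\calB_{\Lie}$, and that this is harmless for homology. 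Once these identifications are in place, the homology computation is supplied verbatim by Theorem~\ref{th:twisting}, so no new acyclicity argument is needed.
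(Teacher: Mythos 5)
Your overall strategy---relate $\calB_{\Lie}$ to the twisting of the pre-Lie operad and then quote a homology computation---is the right general idea, but the identification you build it on is not correct as stated, and this is where the real content lies rather than in ``bookkeeping''. The operad whose decorated trees genuinely match the normal/special-vertex calculus of Section~\ref{sec:twisting} is not $\calB_{\Lie}$ but $\calB_{\Lie_\infty}$, the operad of natural operations on deformation complexes of $\Lie_\infty$-algebras (i.e.\ one must first extend Markl's definition to the cobar resolution $\Omega(\Lie^{\ac})$). Moreover, even $\calB_{\Lie_\infty}$ is not all of $\Tw(\PL)$: it is the variant in which every special vertex is required to have at least two children---Willwacher's operad from \cite[Sec.~3.2]{willwacher2017prelie}---so the special leaves and univalent special vertices that are present in $\Tw(\PL)$ and feature in the differential there are excluded. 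Consequently your second step (``check that the differentials match'') would already fail at the level of underlying graded $\mathbb{S}$-modules, and the homology computation you actually need is not Theorem~\ref{th:twisting} but the closely related \cite[Th.~3.6]{willwacher2017prelie}, which treats precisely this restricted operad.

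Two further ingredients are therefore missing. First, after identifying $\calB_{\Lie_\infty}$ with the restricted twisting and computing its homology to be $\Lie$, one still has to compare $\calB_{\Lie}$ with $\calB_{\Lie_\infty}$; this is a genuine quasi-isomorphism statement, proved by repeating the argument that operadic twisting preserves quasi-isomorphisms (\cite[Th.~5.1]{MR3299688}), and it does not follow from inspecting generators. Second, since the end product of this chain is the homology computation $H_\bullet(\calB_{\Lie})\cong\Lie$ rather than a single zigzag of operad maps under $\Lie$, one should invoke the intrinsic formality of the operad $\Lie$ (\cite[Prop.~3.4]{MR1380606}) to upgrade this to a statement about the homotopy type. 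Your opening observation---that each deformation complex of a Lie algebra is a $\Tw(\PL)$-algebra, giving a map from $\Tw(\PL)$ to natural operations---is correct and does produce the Lie bracket class; but showing that this map is a quasi-isomorphism onto $\calB_{\Lie}$ is essentially equivalent to the theorem itself, so it cannot be obtained by matching tree data as you propose.
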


\begin{proof}
First, as noted in \cite{MR2309979}, the intrinsic formality theorem \cite[Prop.~3.4]{MR1380606} guarantees that it is enough to establish that $H_\bullet(\calB_{\Lie})=\Lie$. Second, it is easy to extend the definition of the operad $\calB_\calP$ to include the case where $\calP$ is a model of a Koszul operad, that is the cobar complex of a Koszul cooperad $\calP^{\ac}$. Examining the operad $\calB_{\Lie_\infty}$, we see that it is very close to the operad $\Tw(\PL)$; in fact, it is exactly the version of the operad $\Tw(\PL)$ where only trees whose special vertices have at least two children are allowed; it was introduced by Willwacher in \cite[Sec.~3.2]{willwacher2017prelie}. According to \cite[Th.~3.6]{willwacher2017prelie} which we already mentioned in the proof of Theorem \ref{th:twisting}, the homology of the latter operad is isomorphic to the operad Lie. It remains to show that the operads $\calB_{\Lie}$ and $\calB_{\Lie_\infty}$ are quasi-isomorphic. The proof of this result repeats \emph{mutatis mutandis} that of the theorem asserting that operadic twisting itself preserves quasi-isomorphisms, see~\cite[Th.~5.1]{MR3299688}. 
\end{proof}

\subsection{Planar trees and the brace operad}

A natural companion of the pre-Lie operad from the combinatorial viewpoint is the operad $\Br$ of brace algebras discovered independently by Getzler \cite{MR1261901}, Kadeishvili \cite{MR1029003}, and Ronco \cite{MR1800716}. Algebraically, it is an operad generated by infinitely many operations $\{a_0; a_1,\ldots, a_n\}$,  $n\ge 1$, 
satisfying a rather complicated system of identities. However, from the combinatorial point of view, one can realise that operad using labelled \emph{planar} rooted trees with operadic insertions described by a planar analogue of the Chapoton--Livernet formula, see \cite{MR1879927,MR1909461}. Let us record here two analogues of our results that can be proved by filtration arguments for graph complexes made of planar trees.

\begin{theorem}
The following complexes are acyclic:
\begin{enumerate}
\item the deformation complex of the map $\Lie\to\Br$ sending $[a_1,a_2]$ to $\{a_1;a_2\}-\{a_2;a_1\}$,
\item the deformation complex of the map $\PL\to\Br$ sending $a_1\triangleleft a_2$ to $\{a_1;a_2\}$.
\end{enumerate}
\end{theorem}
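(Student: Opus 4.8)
The plan is to mimic, in the planar setting, the filtration arguments used for Theorems~\ref{th:LiePreLie} and~\ref{th:PLAss}, using that the operad $\Br$ is combinatorially realised by labelled \emph{planar} rooted trees with a planar Chapoton--Livernet insertion rule. For both statements one sets up the relevant convolution Lie algebra. For part~(1), one works with $\prod_{n\ge 1}\Hom_{\mathbb{S}_n}(\Lie^{\ac}(n),\Br(n))$; since $\Lie^{\ac}(n)\cong s^{n-1}\sgn_n$, after a degree shift this convolution algebra is identified with the underlying space of the free brace algebra on one generator $s^{-1}$, i.e.\ with combinations of planar rooted trees all of whose vertices carry the label $s^{-1}$. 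For part~(2), one uses $\prod_{n\ge 1}\Hom_{\mathbb{S}_n}(\PL^{\ac}(n),\Br(n))$ and the isomorphism $\PL^{\ac}(n)\cong s^{n-1}\sgn_n\otimes\Perm(n)$, so that after the shift this convolution Lie algebra is the underlying space of the free \emph{planar pre-Lie dialgebra} (equivalently, planar vertebrates: planar rooted trees with vertices labelled $s^{-1}$ and one distinguished ``special'' vertex). In both cases the Maurer--Cartan element is a small explicit tree (the two-vertex planar corolla, with the appropriate special/normal decoration), and the twisted differential $d_\alpha$ is the planar analogue of the graph complex differential described in the proofs of Theorems~\ref{th:LiePreLie} and the identity-map theorem: splitting vertices and grafting at a new root, now respecting the planar order of children.

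For part~(1), I would run the planar version of Willwacher's antenna argument from the proof of Theorem~\ref{th:LiePreLie}. Define an antenna of a planar tree as a maximal connected subtree of vertices of valence $1$ and $2$, and filter by the size of the ``core'' obtained by removing antennas. The first page of the resulting spectral sequence is a differential adding edges to individual antennas, and since we are already working with planar trees there are no graph automorphisms to worry about: the first page is literally a (completed) tensor product over the nonempty set of antennas of one-variable complexes, each of which is the acyclic complex computing the reduced homology relevant to adding a black $2$-valent vertex. Acyclicity of each factor plus the K\"unneth formula gives acyclicity of the first page, hence of the whole complex. (One does need to check that the planar insertion rule really produces exactly the ``split a vertex, distribute the incoming edges to the two new vertices \emph{in a planar-compatible way}'' terms; this is where the planar Chapoton--Livernet formula of \cite{MR1879927,MR1909461} is invoked.)

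For part~(2), I would imitate the two-step argument from the proof of the identity-map theorem of Section~\ref{sec:defPL}. The planar vertebrates whose special vertex is the root form a subcomplex (the differential cannot move the special vertex below the root), and forgetting the marking of the root identifies it with the planar graph complex of part~(1), hence it is acyclic. On the quotient, spanned by planar trees with a normal root, one introduces the \emph{spine}, the path from the root to the special vertex, and filters so that the associated graded differential preserves the spine length. The associated graded splits, over the spine length $m$, into an $m$-fold tensor product of complexes attached along the spine vertices; each factor sitting at a normal vertex is again the planar graph complex of part~(1), and because the root is normal there is at least one such factor. K\"unneth then forces the homology to vanish. The one point requiring care is that along the spine the combinatorics of ``attaching a rooted planar tree at a vertex of a given planar tree, on a prescribed side'' must genuinely decompose the associated graded as a tensor product; this is the planar incarnation of the vertebrate/rooted-tree relationship \cite[Ex.~9]{MR633783} used in Section~\ref{sec:defPL}.

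The main obstacle is not conceptual but bookkeeping: one must verify that in the planar world the three pieces of $d_\alpha$ (splitting normal vertices, moving/splitting the special vertex, and the grafting-at-the-new-root plus new-leaf terms) are exactly as in the non-planar proofs, with the single modification that all ``distribute the incoming edges among the two new vertices'' operations must respect the linear order of children; once this planar dictionary is set up, the antenna-filtration acyclicity and the spine-filtration/K\"unneth steps go through verbatim. A secondary point worth a sentence is that working with planar trees is in fact an advantage over the non-planar case treated earlier: there are no nontrivial graph automorphisms, so one does not even need the Maschke-theorem step that appears in the proof of Theorem~\ref{th:LiePreLie}.
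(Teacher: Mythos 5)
Your proposal matches the paper's intended argument: the paper gives no detailed proof of this theorem, stating only that it ``can be proved by filtration arguments for graph complexes made of planar trees'', which is precisely the planar adaptation of the antenna filtration from Theorem~\ref{th:LiePreLie} (for part~(1)) and the spine filtration from Section~\ref{sec:defPL} (for part~(2)) that you carry out. The points you flag for verification --- the planar-compatible distribution of children under the brace insertion and the tensor decomposition along the spine --- are exactly the bookkeeping the paper leaves implicit, and your observation that planarity eliminates the graph automorphisms and hence the Maschke-theorem step is correct.
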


We note that deformaton theory of maps from the brace operad is much harder to study since that operad is not quadratic and therefore not Koszul. One however may apply the operadic twisting to that operad; the result is much more complicated than that for the pre-Lie operad. In fact, according to \cite[Th.~9.3]{MR3299688}, the differential graded operad $\Tw(\Br)$ is quasi-isomorphic to the differential graded brace operad that prominently features in various proofs of the Deligne conjecture \cite{MR1321701,MR1805894,MR1890736,MR1328534}, which allows to compute the homology of the operad $\Tw(\Br)$. 

\begin{theorem}[{\cite{MR3299688}}]
We have the operad isomorphism $H_\bullet(\Tw(\Br))\cong\calS\Gerst$.
\end{theorem}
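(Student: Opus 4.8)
The plan is to reduce the computation to the known homology of a chain model of the little $2$-disks operad, exactly along the lines indicated in the discussion preceding the statement. The decisive first step is to invoke \cite[Th.~9.3]{MR3299688}: operadic twisting applied to $\Br$ produces a dg operad that is quasi-isomorphic to the ``Deligne conjecture brace operad'' $\mathsf{B}$ --- the dg operad appearing, in various incarnations, in \cite{MR1321701,MR1805894,MR1890736,MR1328534} (Kontsevich--Soibelman's minimal operad, McClure--Smith's sequence operad, and their relatives). The proof of that quasi-isomorphism follows the same pattern as the proof that operadic twisting preserves quasi-isomorphisms \cite[Th.~5.1]{MR3299688}: one builds an explicit map of dg operads between $\Tw(\Br)$ and $\mathsf{B}$ and filters both sides so that the map induced on the first page of the associated spectral sequence is an isomorphism. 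Granting this, $H_\bullet(\Tw(\Br))\cong H_\bullet(\mathsf{B})$.

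The second step is to identify $H_\bullet(\mathsf{B})$. By the solution of the Deligne conjecture, $\mathsf{B}$ is a chain model of the topological operad of little $2$-disks $E_2$, whose homology is the Gerstenhaber operad by the classical computation of Arnold and F.~Cohen. The only subtle point --- and the one demanding care --- is the degree normalisation: in the twisting construction used here the formal Maurer--Cartan generator $\alpha$ sits in arity $0$ and homological degree $-1$, so $\Tw(\Br)$ is graded in such a way that its algebras are \emph{shifts} of algebras over chains on $E_2$. Equivalently, a $\Tw(\Br)$-algebra structure on the kind of complex arising in deformation theory (e.g. on a suitably shifted Hochschild cochain complex $C^{\bullet+1}(A_\alpha,A_\alpha)$) is, on homology, precisely a $\calS\Gerst$-algebra: the bracket has degree $0$ and the commutative product has degree $-1$, which is exactly the effect of the operadic suspension $\calS$ on $\Gerst$. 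Transporting this shift through the quasi-isomorphism of the first step yields $H_\bullet(\Tw(\Br))\cong\calS\Gerst$.

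The main obstacle is the first step, i.e. \cite[Th.~9.3]{MR3299688} itself: identifying $\Tw(\Br)$ up to quasi-isomorphism with an established $E_2$-chain model is a genuine theorem rather than a formal consequence of the definitions, precisely because $\Br$ is not quadratic, hence not Koszul, so none of the filtration shortcuts exploited for $\PL$ in the proof of Theorem~\ref{th:twisting} apply directly; one really has to construct the comparison map to the Deligne-conjecture operad explicitly and run a spectral-sequence argument over it. The remaining work --- pinning down signs, and verifying that the operadic suspension that appears is $\calS$ and not $\calS^{-1}$ or the identity --- is routine once the normalisations of the twisting functor and of the chosen chain model of $E_2$ are both written out.
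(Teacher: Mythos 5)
Your proposal follows exactly the route the paper indicates: this theorem is imported from \cite{MR3299688}, and the paper's own justification consists precisely of citing \cite[Th.~9.3]{MR3299688} for the quasi-isomorphism of $\Tw(\Br)$ with the Deligne-conjecture brace operad and then reading off its homology as $\calS\Gerst$ from the known $E_2$-computation. Your additional remarks on the non-Koszulity of $\Br$ and on the degree normalisation responsible for the operadic suspension $\calS$ are accurate and consistent with the paper's discussion.
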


In particular, one has $H_0(\Tw(\Br))\cong\Lie$, and, as indicated in \cite[Sec.~1.4]{MR2325698}, one can use it to describe Lie elements in free brace algebras. It is natural to ask for which Hopf cooperads $\calC$ we have $H_0(\Tw(\PL_\calC))\cong\Lie$,
where $\PL_\calC$ is the operad of $\calC$-enriched rooted trees \cite{MR3411136,dotsenko2020operads} which coincides with $\PL$ for $\calC=\uCom^*$ and with $\Br$ for $\calC=\uAss^*$. This question should be compared to a conjecture of Markl \cite[Conjecture~22]{MR2309979} on the degree zero homology of the operad of natural operations $\calB_\calP$.

\section*{Acknowledgements } We thank Martin Markl for comments on a draft version of this paper. In preparation of the final version of the manuscript, research of the first author was supported by the project HighAGT ANR-20-CE40-0016. Research of the second author was carried out within the HSE University Basic Research Program and supported in part by the Russian Academic Excellence Project '5-100' and in part by the Simons Foundation. This work started during the second author's visit to Trinity College Dublin which became possible because of the financial support of Visiting Professorships and Fellowships Benefaction Fund. Results of  Section \ref{sec:DefKoszul} (in particular, Theorem~\ref{th:Koszul}) have been obtained under support of the RSF grant No.19-11-00275.

\bibliographystyle{amsplain} 
\bibliography{PreLieHtpy.bib}

\providecommand{\bysame}{\leavevmode\hbox to3em{\hrulefill}\thinspace}
\providecommand{\MR}{\relax\ifhmode\unskip\space\fi MR }
\providecommand{\MRhref}[2]{%
  \href{http://www.ams.org/mathscinet-getitem?mr=#1}{#2}
}
\providecommand{\href}[2]{#2}
\begin{thebibliography}{10}

\bibitem{bao2020cohomological}
Yan-Hong Bao, Yan-Hua Wang, Xiao-Wei Xu, Yu~Ye, James~J. Zhang, and Zhi-Bing
  Zhao, \emph{Cohomological invariants of algebraic operads, {I}}, Available
  from the webpage \url{https://arxiv.org/abs/2001.05098}, 2020.

\bibitem{MR3261598}
Michael Batanin and Martin Markl, \emph{Crossed interval groups and operations
  on the {H}ochschild cohomology}, J. Noncommut. Geom. \textbf{8} (2014),
  no.~3, 655--693. \MR{3261598}

\bibitem{MR3431305}
Alexander Berglund, \emph{Rational homotopy theory of mapping spaces via {L}ie
  theory for {$L_\infty$}-algebras}, Homology Homotopy Appl. \textbf{17}
  (2015), no.~2, 343--369. \MR{3431305}

\bibitem{MR3411136}
Damien Calaque and Thomas Willwacher, \emph{Triviality of the higher formality
  theorem}, Proc. Amer. Math. Soc. \textbf{143} (2015), no.~12, 5181--5193.
  \MR{3411136}

\bibitem{Cayley1857}
A.~{Cayley Esq.}, \emph{{XXVIII}. {O}n the theory of the analytical forms
  called trees}, The London, Edinburgh, and Dublin Philosophical Magazine and
  Journal of Science \textbf{13} (1857), no.~85, 172--176.

\bibitem{MR1860996}
Fr\'{e}d\'{e}ric Chapoton, \emph{Un endofoncteur de la cat\'{e}gorie des
  op\'{e}rades}, Dialgebras and related operads, Lecture Notes in Math., vol.
  1763, Springer, Berlin, 2001, pp.~105--110. \MR{1860996}

\bibitem{MR1879927}
\bysame, \emph{Un th\'{e}or\`eme de {C}artier-{M}ilnor-{M}oore-{Q}uillen pour
  les big\`ebres dendriformes et les alg\`ebres braces}, J. Pure Appl. Algebra
  \textbf{168} (2002), no.~1, 1--18. \MR{1879927}

\bibitem{MR1827084}
Fr\'{e}d\'{e}ric Chapoton and Muriel Livernet, \emph{Pre-{L}ie algebras and the
  rooted trees operad}, Internat. Math. Res. Notices (2001), no.~8, 395--408.
  \MR{1827084}

\bibitem{MR3299688}
Vasily Dolgushev and Thomas Willwacher, \emph{Operadic twisting---with an
  application to {D}eligne's conjecture}, J. Pure Appl. Algebra \textbf{219}
  (2015), no.~5, 1349--1428. \MR{3299688}

\bibitem{MR3323983}
Vasily~A. Dolgushev and Christopher~L. Rogers, \emph{A version of the
  {G}oldman-{M}illson theorem for filtered {$L_\infty$}-algebras}, J. Algebra
  \textbf{430} (2015), 260--302. \MR{3323983}

\bibitem{dotsenko2020operads}
Vladimir Dotsenko and Lo\"ic Foissy, \emph{Operads of enriched pre-{L}ie
  algebras and freeness theorems}, Available from the webpage
  \url{https://arxiv.org/abs/2002.10536}, 2020.

\bibitem{dotsenko2018twisting}
Vladimir Dotsenko, Sergey Shadrin, and Bruno Vallette, \emph{The twisting
  procedure}, Available from the webpage
  \url{https://arxiv.org/abs/1810.02941}, 2018.

\bibitem{felipe2011prelie}
Ra\'ul Felipe, \emph{A brief foundation of the left-symmetric dialgebras},
  Comunicaci\'on del CIMAT No I-11-02 available on the webpage
  \url{http://cimat.repositorioinstitucional.mx/jspui/handle/1008/595}, 2011.

\bibitem{MR1909461}
L.~Foissy, \emph{Les alg\`ebres de {H}opf des arbres enracin\'{e}s
  d\'{e}cor\'{e}s. {II}}, Bull. Sci. Math. \textbf{126} (2002), no.~4,
  249--288. \MR{1909461}

\bibitem{MR1321701}
Murray Gerstenhaber and Alexander~A. Voronov, \emph{Homotopy {$G$}-algebras and
  moduli space operad}, Internat. Math. Res. Notices (1995), no.~3, 141--153.
  \MR{1321701}

\bibitem{MR1261901}
Ezra Getzler, \emph{Cartan homotopy formulas and the {G}auss-{M}anin connection
  in cyclic homology}, Quantum deformations of algebras and their
  representations ({R}amat-{G}an, 1991/1992; {R}ehovot, 1991/1992), Israel
  Math. Conf. Proc., vol.~7, Bar-Ilan Univ., Ramat Gan, 1993, pp.~65--78.
  \MR{1261901}

\bibitem{MR633783}
Andr\'{e} Joyal, \emph{Une th\'{e}orie combinatoire des s\'{e}ries formelles},
  Adv. in Math. \textbf{42} (1981), no.~1, 1--82. \MR{633783}

\bibitem{MR1029003}
T.~V. Kadeishvili, \emph{The structure of the {$A(\infty)$}-algebra, and the
  {H}ochschild and {H}arrison cohomologies}, Trudy Tbiliss. Mat. Inst. Razmadze
  Akad. Nauk Gruzin. SSR \textbf{91} (1988), 19--27. \MR{1029003}

\bibitem{keller2004derived}
Bernhard~M. Keller, \emph{Derived invariance of higher structures on the
  {H}ochschild complex}, Preprint from the author's web page
  \url{https://webusers.imj-prg.fr/~bernhard.keller/publ/dih.pdf}, 2003.

\bibitem{MR0371961}
Aleksandr~A. Klyachko, \emph{Lie elements in a tensor algebra}, Sibirsk. Mat.
  \v{Z}. \textbf{15} (1974), 1296--1304, 1430. \MR{0371961}

\bibitem{MR2419658}
P.~S. Kolesnikov, \emph{Varieties of dialgebras, and conformal algebras},
  Sibirsk. Mat. Zh. \textbf{49} (2008), no.~2, 322--339. \MR{2419658}

\bibitem{MR1247289}
Maxim Kontsevich, \emph{Formal (non)commutative symplectic geometry}, The
  {G}el'fand {M}athematical {S}eminars, 1990--1992, Birkh\"{a}user Boston,
  Boston, MA, 1993, pp.~173--187. \MR{1247289}

\bibitem{MR1805894}
Maxim Kontsevich and Yan Soibelman, \emph{Deformations of algebras over operads
  and the {D}eligne conjecture}, Conf\'{e}rence {M}osh\'{e} {F}lato 1999,
  {V}ol. {I} ({D}ijon), Math. Phys. Stud., vol.~21, Kluwer Acad. Publ.,
  Dordrecht, 2000, pp.~255--307. \MR{1805894}

\bibitem{ksdeftheory}
Maxim Kontsevich and Yan Soibelman, \emph{Deformation theory {I}}, Book draft
  available via \url{http://www.math.ksu.edu/~soibel/Book-vol1.ps}, 2007.

\bibitem{MR3220287}
Pascal Lambrechts and Ismar Voli\'{c}, \emph{Formality of the little
  {$N$}-disks operad}, Mem. Amer. Math. Soc. \textbf{230} (2014), no.~1079,
  viii+116. \MR{3220287}

\bibitem{MR1860994}
Jean-Louis Loday, \emph{Dialgebras}, Dialgebras and related operads, Lecture
  Notes in Math., vol. 1763, Springer, Berlin, 2001, pp.~7--66. \MR{1860994}

\bibitem{MR2954392}
Jean-Louis Loday and Bruno Vallette, \emph{Algebraic operads}, Grundlehren der
  Mathematischen Wissenschaften [Fundamental Principles of Mathematical
  Sciences], vol. 346, Springer, Heidelberg, 2012. \MR{2954392}

\bibitem{MR1415558}
Martin Markl, \emph{Cotangent cohomology of a category and deformations}, J.
  Pure Appl. Algebra \textbf{113} (1996), no.~2, 195--218. \MR{1415558}

\bibitem{MR1380606}
\bysame, \emph{Models for operads}, Comm. Algebra \textbf{24} (1996), no.~4,
  1471--1500. \MR{1380606}

\bibitem{MR2309979}
\bysame, \emph{Cohomology operations and the {D}eligne conjecture},
  Czechoslovak Math. J. \textbf{57(132)} (2007), no.~1, 473--503. \MR{2309979}

\bibitem{MR2325698}
\bysame, \emph{Lie elements in pre-{L}ie algebras, trees and cohomology
  operations}, J. Lie Theory \textbf{17} (2007), no.~2, 241--261. \MR{2325698}

\bibitem{MR1890736}
James~E. McClure and Jeffrey~H. Smith, \emph{A solution of {D}eligne's
  {H}ochschild cohomology conjecture}, Recent progress in homotopy theory
  ({B}altimore, {MD}, 2000), Contemp. Math., vol. 293, Amer. Math. Soc.,
  Providence, RI, 2002, pp.~153--193. \MR{1890736}

\bibitem{MR2572248}
Sergei Merkulov and Bruno Vallette, \emph{Deformation theory of representations
  of prop(erad)s. {II}}, J. Reine Angew. Math. \textbf{636} (2009), 123--174.
  \MR{2572248}

\bibitem{MR1800716}
Mar\'{\i}a Ronco, \emph{Primitive elements in a free dendriform algebra}, New
  trends in {H}opf algebra theory ({L}a {F}alda, 1999), Contemp. Math., vol.
  267, Amer. Math. Soc., Providence, RI, 2000, pp.~245--263. \MR{1800716}

\bibitem{vdLaan}
P.~P.~I. van~der Laan, \emph{Operads : {H}opf algebras and coloured {K}oszul
  duality}, PhD thesis, Utrecht University, available on the webpage
  \url{https://dspace.library.uu.nl/handle/1874/31825}, 2004.

\bibitem{MR1328534}
A.~A. Voronov and M.~Gerstenkhaber, \emph{Higher-order operations on the
  {H}ochschild complex}, Funktsional. Anal. i Prilozhen. \textbf{29} (1995),
  no.~1, 1--6, 96. \MR{1328534}

\bibitem{MR3348138}
Thomas Willwacher, \emph{M. {K}ontsevich's graph complex and the
  {G}rothendieck-{T}eichm\"{u}ller {L}ie algebra}, Invent. Math. \textbf{200}
  (2015), no.~3, 671--760. \MR{3348138}

\bibitem{willwacher2017prelie}
Thomas Willwacher, \emph{Pre-{L}ie pairs and triviality of the {L}ie bracket on
  the twisted hairy graph complexes}, Available from the webpage
  \url{https://arxiv.org/abs/1702.04504}, 2017.

\end{thebibliography}

\end{document}